\newcommand{\rr}[1]{\textcolor{black}{#1}}
\def\be{\begin{equation}}
\def\ee{\end{equation}}
\def\bea{\begin{align}}
\def\eea{\end{align}}
\def\bea*{\begin{align*}}
\def\eea*{\end{align*}}
\def\cref#1{Figure~\ref{#1}}
\theoremstyle{plain}
\newtheorem{theorem}{Theorem}[section]
\newtheorem*{theorem*}{Theorem}
\newtheorem{lemma}{Lemma}[section]
\newtheorem{corollary}{Corollary}[section]
\newtheorem{proposition}{Proposition}[section]
\newtheorem{asm}{Assumption}[section]
\theoremstyle{definition} 
\newtheorem{definition}{Definition}[section]
\newtheorem{example}{Example}
\newtheorem{remark}{Remark}[section]
\DeclareMathOperator{\E}{\mathbb{E}}
\DeclareMathOperator{\F}{\mathbb{F}}
\DeclareMathOperator{\N}{\mathbb{N}}
\DeclareMathOperator{\Q}{\mathbb{Q}}
\DeclareMathOperator{\R}{\mathbb{R}}
\DeclareMathOperator{\calC}{\mathcal{C}}
\DeclareMathOperator{\calE}{\mathcal{E}}
\DeclareMathOperator{\calF}{\mathcal{F}}
\DeclareMathOperator{\calK}{\mathcal{K}}
\DeclareMathOperator{\calL}{\mathcal{L}}
\DeclareMathOperator{\calO}{\mathcal{O}}
\DeclareMathOperator{\calP}{\mathcal{P}}
\DeclareMathOperator{\calS}{\mathcal{S}}
\DeclareMathOperator{\calT}{\mathcal{T}}
\DeclareMathOperator{\calX}{\mathcal{X}}
\DeclareMathOperator{\frakB}{\mathfrak{B}}
\DeclareMathOperator{\frakF}{\mathfrak{F}}
\DeclareMathOperator{\frakm}{\mathfrak{m}}
\DeclareMathOperator{\frakN}{\mathfrak{N}}
\DeclareMathOperator{\frakS}{\mathfrak{S}}
\numberwithin{equation}{section}
\title{Stopping Times
of Boundaries: \\ Relaxation and Continuity}
\author{H. Mete Soner\footnote{Department of Operations Research and Financial
Engineering, Princeton University, Princeton, NJ, 08540, USA, email: 
{\tt soner@princeton.edu}. Research partially supported by the National Science Foundation grant DMS 2406762.}
\and Valentin Tissot-Daguette\footnote{Department of Operations Research and Financial
Engineering, Princeton University, Princeton, NJ, 08540, USA, email: 
{\tt v.tissot-daguette@princeton.edu}}}
\date{\today}
\begin{document}
\maketitle

\vspace{0mm}
\begin{abstract}

We study the properties of the free boundaries and the corresponding hitting times 
in the context of  optimal stopping in discrete time. 
We first prove the continuity of the map from the boundaries to 
the expected value of the corresponding stopping policy both in
the supremum norm and also in a weaker, novel topology
induced by  the \textit{relaxed $L^{\infty}$ metric} that we introduce.
The latter is particularly useful when the optimal stopping boundary is 
only proved to be semicontinuous.   Secondly,
we study the connection between the hitting times,
and their relaxations as  widely employed in recent
numerical methods.  All these results 
together with the universal approximation capability 
of neural networks and the notion of   inf/sup  convolution
are then used
to provide a convergence analysis for the algorithm 
in  [Reppen, Soner, and Tissot-Daguette, Neural Optimal Stopping Boundary, 2025]  for the numerical
resolution of the exercise regions arising in 
the analysis of Bermudan type option. 

\end{abstract}
\vspace{1mm}
	
\textbf{Keywords:} Hitting times,  relaxed stopping rules,  universal approximation theorem,  \\ inf/sup convolution, convergence analysis,  Bermudan options. %
\\ \vspace{-3mm}

\textbf{Mathematics Subject Classification}:  
60G40, 
49J45, 
60G57,  
68T07, 
91G20. 

\section{Introduction}
The  optimal stopping problem, given {in its} simplest form  by
\begin{equation}\label{eq:OS0}
  \sup \left\{ v(\tau) := \E^{\Q}[ \varphi(\tau,X_{\tau})] \ : \ \tau = \text{ stopping time}\right\},
\end{equation}
for a {stochastic} process $X $ and  
a reward function $\varphi$,  
plays a central role in stochastic  optimal control and has applications  
{in numerous quantitative fields}. 
{In mathematical statistics, the theory of optimal stopping 
can be utilized to address sequential tests, such as the quickest detection of a Wiener process} 
\cite{ShiryaevStat, PeskirShiryaev}. 
In mathematical finance, optimal stopping problems emerge naturally from 
the pricing of American or Bermudan contingent claims 
\cite{elKarouiPardouxQuenez,Karatzas,PeskirShiryaev,Schweizer}. 
In this context, $\varphi$ is  the payoff of the claim, $X$ describes 
the price evolution of the underlying asset, and the optimal value in 
\eqref{eq:OS0} gives the initial price of the contract. 
{While the initial} motivation of this paper is to provide 
a convergence study of the neural  stopping boundary
algorithm of  \cite{ReppenSonerTissotDF,ReppenSonerTissotFB},
{our analysis is essentially applicable to all scenarios 
where the exit times play a central role. Consequently,
we present our findings in this broader context, then
analyze the mentioned algorithm using these general results.}

The unifying structure is the approximate parameterization of 
the \textit{stopping boundaries} 
by feedforward artificial neural networks,
and  the central question revolves around their asymptotic effectiveness.
In fact, the algorithm developed in \cite{ReppenSonerTissotFB}
computes the \textit{stopping} (or \textit{exercise}) \textit{region} 
for Bermudan options. 
\rr{Following \cite{ReppenSonerTissotFB},
we assume that in an appropriate coordinate system,
the optimal stopping region is the \textit{epigraph} 
of a function $f^{\diamond}$. In other words, 
stopping occurs the first time the underlying process lies above the boundary $f^{\diamond}$. } 
Consequently, the hitting time of  $f^{\diamond}$ is  optimal for \eqref{eq:OS0}. 
Hence, we can restrict the set of stopping times in \eqref{eq:OS0} to hitting times of boundaries.
Namely,
\begin{equation}\label{eq:OS0f}
  \sup \left\{ v(\tau_f) := \E^{\Q}[ \varphi(\tau_f,X_{\tau_f})] \ : \ f = \text{ stopping boundary} \right\},
\end{equation}
where $\tau_f$ is the hitting time of the epigraph (or hypograph) of $f$.
{In the neural boundary algorithm, networks approximate $f^{\diamond}$,
and the convergence analysis of this method
relies mainly on the properties of the relaxation of the hitting times,
and the continuity of the map from the boundaries 
to the expected value of the corresponding stopping policy,
as proved in \cref{sec:relaxation,sec:continuity}, respectively.}

It is well-known that, training with 
stochastic gradient descent of loss functions 
computed via hitting times results in vanishing gradient,
and as in the \textit{deep optimal stopping} of  \cite{Becker1,Becker2} and in \cite{ReppenSonerTissotFB},
relaxed stopping must be introduced.
Relegating the technical details to \cref{sec:relaxation}, 
the relaxed version of problem \eqref{eq:OS0}  is given by,
\begin{equation}\label{eq:relax0}
    \sup\left\{ v(\mu) := \E^{\Q}\left[\int \varphi(t,X_{t}) \mu(dt) \right] \ : \ \mu =  \text{ random probability measure}\right\}.
\end{equation}

Also called \textit{convexification} \cite{Acikmese} or \textit{compactification}     
\cite{ElKaroui}, relaxation procedures such as \eqref{eq:relax0} are 
classically used in stochastic control problems to prove the existence of an optimal solution \cite{FlemingDomokos,Haussmann_Lepeltier,McShane}. 
Indeed, {relaxation convexifies the problem allowing 
for numerous methods and tools for its analysis. 
Moreover, as we prove in  \cref{prop:relaxOS},
both  \eqref{eq:OS0} and \eqref{eq:relax0} achieve the same optimal value. 
Indeed, this holds essentially in all other} contexts; 
see, e.g.,  \cite{Gyongy,Hobson} and \cite[Lemma 1.5.2.]{Krylov} 
for continuous time  optimal stopping, and 
\cite{ElKaroui,FlemingDomokos,Haussmann_Lepeltier} for general stochastic optimal control.  

\rr{In the neural optimal stopping boundary method,
the stopping policy is relaxed by introducing a  so-called \textit{fuzzy} region around the boundary
with a small width  $\varepsilon>0$. Concretely, the fuzzy region separates the continuation and stopping regions through a gradual transition: 
the stopping rule remains the same  outside the fuzzy region, while inside, 
the usual binary decision ("exercise or continue")  is replaced by stopping intensities  proportional to the signed distance to the boundary. }
This procedure generates   \textit{relaxed} hitting times of boundaries$-$namely random  
probability measures supported  on the set of exercise dates. 
The relaxed problem \eqref{eq:relax0} thus becomes
\begin{equation}\label{eq:relax1}
\sup\left\{ v(\mu_f^\varepsilon) := \E^{\Q}\left[\int \varphi(t,X_{t}) 
{\mu_f^\varepsilon}(dt) \right] \ : \   {\mu_f^\varepsilon} =  \text{relaxed hitting time of $f$}\right\}.
\end{equation}
As shifting the boundary ever so slightly changes the stopping intensities inside the fuzzy region, the above relaxation makes stochastic gradient methods applicable. \rr{The fuzzy region is akin to the \textit{mushy zone}  
encountered in phase-field models \cite{B-mushy,Soner,V-mushy}, a physical phenomenon characterized by the  coexistence of two phases (typically solid and liquid) within a material. } 
The introduction of a  fuzzy  region is critical in other deep learning algorithms as well,  such as  occupation networks in  computer vision  \cite{OccupancyNN} or deep level-set functions for solving Stefan problems  \cite{SST}. 

Our central results of continuity are proved in 
\cref{sec:continuity}. In particular, \cref{thm:unifcontV,thm:lscV} establish the continuity
of the value functional $\frakF \ni f \mapsto v(\tau_f)$, 
where $\frakF$ is a set of stopping boundaries
{\rr{that are lower semi-continuous}}. 
Formally, these results are
in the following form.
{\rr{\begin{theorem*}\label{thm:unifcontV0}
\textnormal{\textbf{(Continuity of $\boldsymbol{f \mapsto v(\tau_f)}$)}} 
Let $\frakm$ be a metric on $\frakF$. 
Then for all $\delta>0$, there exists $\iota  >0$ such that for all $f, g \in \frakF$,
\begin{equation*}
\frakm(f,g) \le \iota \ \Longrightarrow   \ |v(\tau_{f}) - v(\tau_{g})| \le \delta. 
\end{equation*}
\end{theorem*}
\noindent
Indeed, while \cref{thm:unifcontV} employs the supremum norm,
we consider a weaker topology induced by the \emph{relaxed $L^{\infty}$ metric} 
of Definition \ref{def:relaxedSupDist} 
in \cref{thm:lscV} to accommodate  boundaries that are only semi-continuous. 
Concretely, this metric is defined as 
$$
\frakm(f,g):= \frakm_d(f,g)+\frakm_d(g,f), \quad  \; \; 
\frakm_d(f,g)
:=  \inf_{\psi\in \Psi}  
\sup_{\xi \in \calE} \ \big[(f(\psi(\xi))- g(\xi))^+ +  |\psi(\xi)-\xi| \big],
 \label{eq:relaxDistH}
$$
where $\Psi$ contains all endofunctions $\psi:\calE \to \calE$.
The above metric discussed further in subsection \ref{sec:relaxLinfty} is 
similar to the Skorokhod $J1$ distance  for càdlàg processes,
and compares  boundaries in $\frakF$ uniformly, 
possibly after a local transformation of the domain. 
It is also inspired by the inf/sup convolution,  which is an effective regularization 
tool widely used in the theory of viscosity solutions  \cite{FS, LasryLions}. We shall prove in \cref{lem:metric} that the "directed" map 
$\frakm_d$ is an \emph{asymmetric metric}; see  \citet{mennucci}.}} 

An immediate corollary, \cref{cor:relaxUnif}, 
{\rr{to the continuity results}}
is the convergence  of the value \eqref{eq:relax1}
associated to the relaxed stopping times
to the value using the same interface  as the width $\varepsilon$ of the fuzzy region tends to zero.   

 In \cref{sec:NOSB}, we combine the above results and provide a convergence 
 analysis of the \textit{neural optimal stopping boundary} algorithm \cite{ReppenSonerTissotFB}. 
Briefly, this algorithm looks for a \textit{neural stopping boundary}, 
namely a \textit{feedforward neural network} $g^{\theta}$   
such that 
$$
{v(\mu_{g^\theta}^\varepsilon)} \approx v(\tau_{g^{\theta}}) \approx \sup_{f\in \frakF}
v(\tau_f) \  = v(\tau_{f^{\diamond}}).
$$ 
The parameter vector $\theta$ is then trained by 
maximizing the relaxed reward $v(\mu_{g^\theta}^\varepsilon)$ using stochastic gradient ascent.  
The following result, stated  precisely in \cref{thm:convCont}, implies
the  convergence of the algorithm employed in \cite{ReppenSonerTissotFB}.
\begin{theorem*}\label{thm:convCont0} \textnormal{\textbf{(Convergence)}}
For all  $\delta>0$,  there exists $\varepsilon>0$ and a feedforward  
neural network  $g^{\theta}$ such that ${v(\mu_{g^{\theta}}^\varepsilon)}\ge v(\tau_{f^{\diamond}}) - \delta$. 
\end{theorem*}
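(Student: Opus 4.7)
The plan is to chain the continuity theorem above with the classical Universal Approximation Theorem of Cybenko--Hornik \cite{Cybenko,Hornik}, interpreting $\varepsilon$ as the sup-norm accuracy with which the network $g^{\theta}$ approximates $f^{\diamond}$. Since $f^{\diamond}$ is assumed continuous on the relevant compact domain, the continuity theorem applies with the sup-norm metric: given $\delta>0$, there exists $\iota>0$ such that any $f\in\frakF$ with $\|f-f^{\diamond}\|_{\infty}\le\iota$ satisfies $v(\tau_{f})\ge v(\tau_{f^{\diamond}})-\delta$. The task therefore reduces to exhibiting a feedforward neural network $g^{\theta}\in\frakF$ that is $\iota$-close to $f^{\diamond}$ in sup norm.

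Next, I invoke the Universal Approximation Theorem, which, for any prescribed accuracy $\varepsilon>0$, produces a feedforward network $g^{\theta}$ with $\|g^{\theta}-f^{\diamond}\|_{\infty}\le\varepsilon$ on the compact domain. Setting $\varepsilon:=\iota$ and chaining with the previous step yields $v(\tau_{g^{\theta}})\ge v(\tau_{f^{\diamond}})-\delta$, which is the desired conclusion.

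The main obstacle is the admissibility step: the Universal Approximation Theorem produces an element of the broad class of continuous maps, whereas $\frakF$ typically encodes additional structural constraints on the boundary (e.g.\ values confined to a prescribed interval inherited from the boundedness of the underlying stopping region, or monotonicity in certain coordinates, or a specific sign of the epigraph/hypograph orientation). The standard remedy is to postcompose the raw network with a smooth clipping or projection onto the admissible range; this operation is $1$-Lipschitz and does not inflate the sup-norm error, and it is inactive near $f^{\diamond}$ whenever $f^{\diamond}$ lies strictly in the interior of that range, which is the generic situation. If further structural constraints are imposed on $\frakF$, one substitutes the vanilla Cybenko--Hornik statement by a universal approximation result within the constrained class (e.g.\ monotonic networks), which does not affect the overall argument. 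A minor secondary point is that the compact domain entering the sup norm must be chosen large enough to cover the support of $X$ that is relevant for \eqref{eq:OS0f}, but this is already built into the setup of Section~\ref{sec:OS}.
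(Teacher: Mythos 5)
Your argument is correct for the statement as displayed, and it coincides with Step 1 of the paper's proof of \cref{thm:convCont}: combine \cref{thm:unifcontV} (continuity of $f\mapsto v(\tau_f)$ with respect to the sup norm on a compact set $\calK$ capturing most of the mass of $\Xi(X)$) with the universal approximation property (Assumption \ref{asm:UAP}) to produce $g^{\diamond}\in\frakN$ with $v(\tau_{g^{\diamond}})\ge v^{\diamond}-\delta$; your admissibility remark is handled in the paper simply by postulating $\frakN\subset\frakF_c$ together with the approximation property, so nothing further is needed there. The substantive divergence is the role of $\varepsilon$: in the paper it is \emph{not} the sup-norm accuracy of the network but the width of the fuzzy region defining the relaxed stopping rule $\mu^{\varepsilon}_g$, and the full version of this theorem (\cref{thm:convCont}) asserts the bound for \emph{any} $\gamma$-maximizer of the relaxed objective $\sup_{g\in\frakN} v(\mu^{\varepsilon}_g)$ --- that is, for the network the training algorithm actually outputs, not merely for some network whose existence is guaranteed by approximation theory. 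Establishing that stronger claim requires the additional Steps 2--4 of the paper's proof, which use \cref{cor:relaxUnif} to control $|v(\mu^{\varepsilon}_f)-v(\tau_f)|\le C\sqrt{\rho(\varepsilon)}$ uniformly in $f$ and then pass from the relaxed supremum to the $\gamma$-maximizer. So your route proves the literal existence statement more directly, but it loses the algorithmic content that the condensed theorem is summarizing; if you intend to prove \cref{thm:convCont} itself, you still need the relaxation sandwich.
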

\noindent
When the optimal stopping boundary $f^{\diamond}$ is continuous, 
one can apply the  Universal Approximation 
Theorem \cite{Cybenko,Hornik} stating  that any continuous function can be 
approximated arbitrarily well by  feedforward neural networks in  compact sets. 
This together with the continuity of $f \mapsto v(\tau_f)$,
implies the above result.
However, the optimal stopping boundary $f^{\diamond}$ 
may fail to be continuous, preventing the use of the Universal Approximation Theorem. 
{Nevertheless,  under mild assumptions, the stopping boundary is  
shown to be semicontinuous in \cref{prop:lscBdry}.  

Motivated by this, we first extend in \cref{thm:LSCUAT} the Universal Approximation Theorem 
by proving that semicontinuous functions can be 
approximated by neural networks  in the 
\textit{relaxed $L^{\infty}$ metric} introduced in  Definition \ref{def:relaxedSupDist}. 
This is achieved through inf-sup convolutions
recalled in \cref{sec:convLSC}.   
This approximation together
with  \cref{thm:lscV} and the semicontinuity of $f^\diamond$
enables us to prove the above 
theorem. 
Incidentally, we suggest that the universal approximation
proved in \cref{thm:LSCUAT},}
is an interesting result in its own right 
and can be applied to other tasks  involving the approximation of  semicontinuous functions.

There is a recent body of literature on the convergence of deep learning methods in stochastic optimal control:  \citet{Hure} study the convergence  of discrete time stochastic control problems involving policy and/or value iteration; \citet{LapeyreLelong} prove the convergence of the Longstaff-Schwartz algorithm \cite{LSMC} using neural networks to approximate the continuation value; 
\citet{Gonon} sheds light on the expressivity and maximal size  of   neural networks to produce optimal stopping rules.  
{Additionally,}  \cite[Proposition 4]{Becker1} of the \textit{deep optimal stopping} paper 
 provies the near optimality of deep stopping strategies. 
 {A common property used in all these papers is dynamic programming,
 while} the neural optimal stopping boundary algorithm 
  \cite{ReppenSonerTissotFB} constructs an optimal stopping rule in a forward fashion.
  Consequently, our convergence analysis  necessitates 
  other techniques and tools, and differs greatly from previous works.  

\textbf{Structure of the paper.} In \cref{sec:OS}, we introduce the optimal stopping problem,  our main structural assumption on the exercise region, and give financial examples. In \cref{sec:relaxation}, we define the fuzzy region and construct relaxed stopping rules. \cref{sec:continuity} establishes the  continuity of the map $f\mapsto v(\tau_f)$ as well as important consequences. In \cref{sec:NOSB}, we prove the convergence of the neural optimal stopping boundary method in the continuous and semicontinuous case, respectively.  \cref{sec:conclusion} provides  concluding remarks and \cref{app:relaxedproblem} contains the proof of \cref{prop:relaxOS}. 

\section{Optimal stopping}\label{sec:OS}
 Let $\calT \subset [0,\infty)$  a  finite subset of  time indices containing $0$ and
$(\Omega, \F = (\calF_t)_{t \in \calT},\Q)$ a filtered probability space. 
The state $X = (X_t)_{t\in \calT}$
is an $\F$-adapted, Markov process which take values in a Euclidean space $\calX$. We assume for concreteness that $\F$ is the  filtration generated by $X$.
 Given a \emph{reward (or payoff) function} $\varphi:\calT \times \calX \to \R$,  the associated \emph{optimal stopping problem}  is to 
\begin{equation}
\label{eq:OS}
\text{maximize}
\ \ 
v(\tau):=  \E^{\Q}[ \varphi(\tau,X_{\tau})],
\qquad
\text{over}\  \tau \, \in \, \vartheta(\calT),
\end{equation} 
where 
$\vartheta(\calT)$ is the set of 
$\calT$-valued $\F$-stopping times.
Problem $\eqref{eq:OS}$ is well-defined (and non-trivial) if   $\max_{t\in \calT}|\varphi(t,X_t)| \in L^1(\Q)$. We write $v^{\diamond}$ for the optimal  value of \eqref{eq:OS}. 
The following assumption on $\varphi$ and the dynamics of $X$ will prove useful.  
\begin{asm}
\label{asm:growth}
There exists $a>0$ 
such that 
$\varphi(t,\cdot) \in \calL_a \ \forall \ t \in \calT$, where 
\be
\label{e.lca}
\calL_{a}:= \left\{ \phi \in \calC(\calX;\R) \ : \ 
|\phi(x) | \le C[1+|x|^a]\ \
\text{for some}\ C>0 \right\}.
\ee
Also, we suppose that $\phi \in \calL_a \Longrightarrow K_{t,s} \phi \in \calL_a$ $\forall \  t, s \in \calT$, with the transition semigroup
\begin{equation}\label{eq:semigroup}
    K_{t,s} \phi(x)=
\E[ \phi(X_{s}) \mid X_{t}=x], \quad x\in \calX, \quad t<s. 
\end{equation}
\end{asm}

Allowing the second component of  $K_{\cdot,\cdot}$  in \eqref{eq:semigroup} to be a stopping time,  
define  the value function  
\begin{equation}
\label{eq:vdiamond}
    v^{\diamond}(t,x)= \sup_{t \le  \tau \in \vartheta(\calT )}
K_{t,\tau} \varphi(\cdot, x) \ , 
\end{equation}
and stopping region
$\calS_t = \{x\in \calX  :   \varphi(t,x) = v^{\diamond}(t,x)\}, \ t\in \calT. $
We then have the following result, proved in \cite{ReppenSonerTissotFB}. 
\begin{proposition}
\label{prop:closedStopRegion} Under Assumption \ref{asm:growth},
the value function $v(t,\cdot)$ belongs to $ \calL_a$ for each $ t\in \calT$. In particular, $v^{\diamond}(t,\cdot)$ is continuous and the stopping region $\calS_t$ is a relatively closed subset of $\calX$
.
\end{proposition}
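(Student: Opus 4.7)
The plan is to proceed by backward induction on $t \in \calT$, leveraging the finiteness of the index set. The engine will be the discrete-time dynamic programming principle (DPP) together with the preservation of $\calL_a$ under the transition semigroup $K_{t,s}$ postulated in Assumption~\ref{asm:growth}. For the base case at the terminal date $T := \max \calT$, the only $\calT$-valued stopping time $\tau \ge T$ is $\tau \equiv T$, so $v^{\diamond}(T,\cdot) = \varphi(T,\cdot) \in \calL_a$ by the first clause of the assumption. For the inductive step, I would assume $v^{\diamond}(s,\cdot) \in \calL_a$ for every $s \in \calT$ with $s > t$, let $t^+$ denote the immediate successor of $t$, and invoke the DPP
\begin{equation*}
v^{\diamond}(t,x) \;=\; \max\bigl\{\varphi(t,x),\ K_{t,t^+}\, v^{\diamond}(t^+,\cdot)(x)\bigr\}, \qquad x \in \calX.
\end{equation*}

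The inductive hypothesis combined with the semigroup clause of Assumption~\ref{asm:growth} yields $K_{t,t^+}\, v^{\diamond}(t^+,\cdot) \in \calL_a$, while $\varphi(t,\cdot) \in \calL_a$ by the first clause. Since $\calL_a$ is closed under pointwise maximum---continuity of a max of two continuous functions is standard, and $|\max(\phi,\psi)(x)| \le |\phi(x)| + |\psi(x)|$ transports the polynomial growth bound---the induction advances, so $v^{\diamond}(t,\cdot) \in \calL_a$ and in particular is continuous. For the stopping region, the trivial choice $\tau \equiv t$ in the supremum defining $v^{\diamond}(t,\cdot)$ shows $v^{\diamond}(t,\cdot) \ge \varphi(t,\cdot)$ pointwise, hence
\begin{equation*}
\calS_t \;=\; \{x \in \calX : v^{\diamond}(t,x) - \varphi(t,x) = 0\}
\end{equation*}
is the zero level set of the continuous function $v^{\diamond}(t,\cdot) - \varphi(t,\cdot)$, and therefore closed in $\calX$.

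The one step meriting care is justifying the DPP in the form used above: one must appeal to the Markov property of $X$ to reduce $\E^{\Q}[v^{\diamond}(t^+, X_{t^+}) \mid \calF_t]$ to $K_{t,t^+}\, v^{\diamond}(t^+,\cdot)(X_t)$, and separately argue that any $\tau \in \vartheta(\calT)$ with $\tau \ge t$ decomposes on the $\calF_t$-measurable events $\{\tau = t\}$ and $\{\tau > t\}$ into the ``stop now'' decision and a $\calT \cap [t^+, T]$-valued stopping time, allowing the supremum over $\tau$ to split into an outer maximum over the two alternatives. This is classical for finite-horizon Markovian optimal stopping; once it is in hand, the inheritance of continuity and polynomial growth through $\max$ and $K_{t,t^+}$ makes the rest routine.
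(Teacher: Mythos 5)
Your proof is correct: the backward induction on the finite time grid via the dynamic programming principle, the stability of $\calL_a$ under pointwise maximum and under $K_{t,t^+}$, and the identification of $\calS_t$ as the zero level set of the continuous function $v^{\diamond}(t,\cdot)-\varphi(t,\cdot)$ is exactly the standard argument. The paper itself does not reprove this statement but cites \cite{ReppenSonerTissotFB}, where the proof proceeds along the same lines, so your proposal matches the intended approach.
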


Similar to   \cite{ReppenSonerTissotFB}, we make the following structural  assumption  
yielding the existence of a  \textit{stopping boundary} (or \textit{free boundary})   delimiting the stopping region in a problem specific coordinate system. 
\begin{asm}
\label{asm:star}
There exist  measurable functions
$$
\alpha: \calX \to \R_+,
\quad
\Xi: \calX \to \calE := \Xi(\calX),
\quad
 f^{\diamond}: \calT \times \ \calE \to [0,\infty],
 $$
 such that $A = (\Xi,\alpha): \calX \to  \calE \times \R_+$ is a homeomorphism
 and for every $t \in \calT$,
\be
\label{e.st}
\calS_t = \{  x \in  \calX \ :\
\eta \left(f^{\diamond}(t,\Xi(x))-\alpha(x)\right) \le 0  \}, \qquad \eta \in \{-1,1\}. 
\ee 
\end{asm}
We suppose that the \textit{latent space} $\calE$ is itself  a subset of a Euclidean space (e.g. $\R^m$ for some $m\ge 1$) and write $|\xi|$ for the norm of $\xi\in \calE$. 
 Under the above assumption, the \textit{optimal stopping boundary} is thus given by 
\begin{align}\label{eq:stopBdry}
    f^{\diamond}(t,\xi) = \begin{cases} 
    \inf\{a \ge 0 \ : \ A^{-1}(\xi,a) \in \calS_t\}, & \eta = 1, \\[0.5em]
    \sup\{a \ge 0 \ : \ A^{-1}(\xi,a) \in \calS_t\}, & \eta = -1.
    \end{cases}
\end{align}
\begin{figure}[t]
\caption{Stopping region of a max-call option on two symmetric assets. The upper connected component becomes an epigraph through $(\alpha,\Xi)$ \rr{(right panel), where we   identify $\Xi(x) = (\frac{x_1}{x_2},1) \in \R^2$ with its first component.} Figures adapted from \cite{ReppenSonerTissotFB}.  } 
\vspace{-2mm}
\begin{subfigure}[b]{0.49\textwidth}
    \centering
    \caption{Stopping region in $\calX$}
    \includegraphics[height=2.0in,width=2.3in]{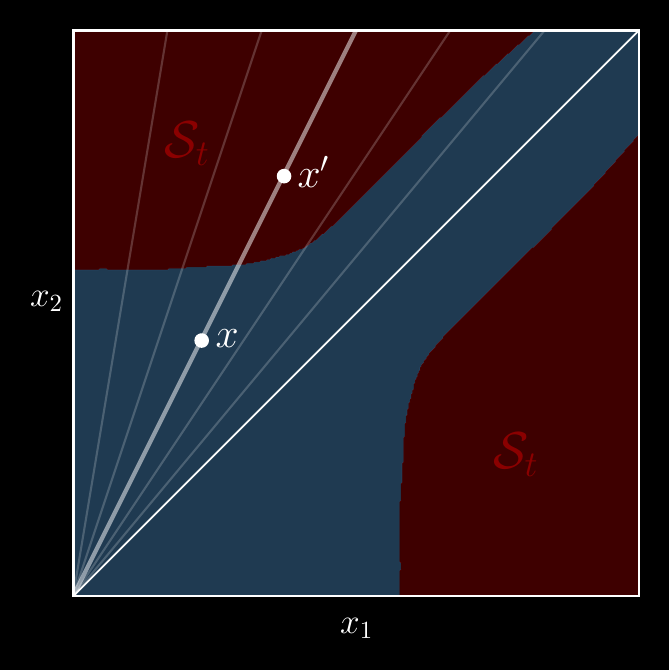}
    \label{fig:X}
\end{subfigure}
\begin{subfigure}[b]{0.49\textwidth}
    \centering
    \caption{Stopping region through $(\alpha,\Xi)$}
    \includegraphics[height=2.0in,width=2.3in]{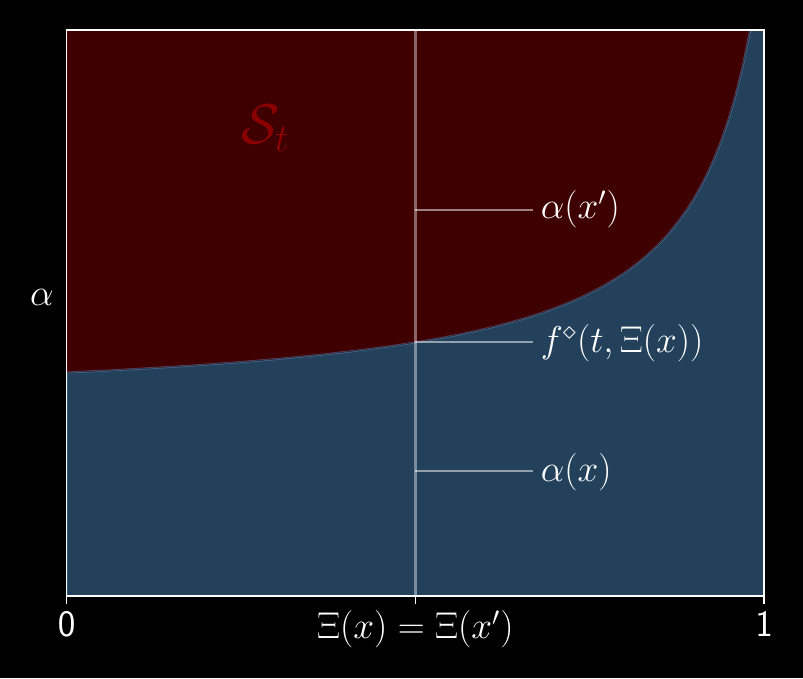}
    \label{fig:homeo}
\end{subfigure}
\label{fig:homeomorphism}
\end{figure}  
Consequently, the time $t$ section of the stopping region is the epigraph  of $f^{\diamond}(t,\cdot)$ when  $\eta =1$; see \cref{fig:homeomorphism}. Similarly, $\calS_t$ is the hypograph of $f^{\diamond}(t,\cdot)$ when $\eta =-1$. We refer the reader to \cite{BroadieDetemple} for further  information on the structure  of the stopping region and to \cite{Laurence} for a deep proof of the regularity of the boundary.  Additionally, classical books \cite{Velichkov,Petrosyan}  provide  a thorough treatment of the free boundary regularity in one-phase problems and respectively, in obstacle problems. 
For completeness, next we prove the following  simple properties  of $f^{\diamond}$. 
\begin{proposition} \label{prop:lscBdry}
    For every $t\in \calT$, the map $\xi \mapsto f^{\diamond}(t,\xi)$ is lower (respectively upper) semicontinuous when  $\eta =1$ (resp. $\eta = -1$). Moreover, if the payoff function $\varphi$ is convex in $x$, then $f^{\diamond}$ is locally Lipschitz continuous. 
\end{proposition}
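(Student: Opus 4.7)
I would obtain the semicontinuity claim as a direct consequence of the epigraph/hypograph characterization. By \cref{prop:closedStopRegion}, $\calS_t$ is relatively closed in $\calX$; by \cref{asm:star}, $A=(\Xi,\alpha)$ is a homeomorphism of $\calX$ onto $\calE\times\R_+$. Hence $A(\calS_t)$ is closed in $\calE\times\R_+$. Rewriting \eqref{e.st} in the $(\xi,a)$ coordinates gives, for $\eta=1$,
\[
A(\calS_t)\ =\ \bigl\{(\xi,a)\in\calE\times\R_+\ :\ a\ \ge\ f^{\diamond}(t,\xi)\bigr\},
\]
which is exactly the epigraph of $f^{\diamond}(t,\cdot)$ restricted to non-negative heights. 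Since a function taking values in $[0,+\infty]$ whose epigraph is closed is lower semicontinuous, this yields the claim. The case $\eta=-1$ is strictly analogous: the very same display now defines the hypograph of $f^{\diamond}(t,\cdot)$, and its closedness is equivalent to upper semicontinuity.

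For the locally Lipschitz claim under $\varphi(t,\cdot)$ convex, the strategy is first to propagate convexity to the value function and then to exploit the resulting structure at the free boundary. Convexity of $v^{\diamond}(t,\cdot)$ should follow by backward induction on the Bellman relation $v^{\diamond}(t,x)=\max\{\varphi(t,x),\,(K_{t,s}v^{\diamond}(s,\cdot))(x)\}$, provided the transition semigroup $K_{t,s}$ preserves convexity in $x$. This is the case for the geometric dynamics underlying the American-option references \cite{BroadieDetemple, Laurence}, and more generally whenever the law of $X_s$ given $X_t=x$ is an affine pushforward in $x$. Consequently $\psi(t,x):=v^{\diamond}(t,x)-\varphi(t,x)\ge 0$ is a difference of convex functions, hence locally Lipschitz in $x$, and $\calS_t=\{\psi(t,\cdot)=0\}$.

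The final step is to lift this regularity from $\psi$ to $f^{\diamond}(t,\cdot)$. Writing $\widetilde\psi(\xi,a):=\psi(t,A^{-1}(\xi,a))$ in the $(\xi,a)$ chart, the map $a\mapsto\widetilde\psi(\xi,a)$ is monotone near the boundary$-$the process enters $\calS_t$ precisely as $a$ crosses $f^{\diamond}(t,\xi)$$-$and a quantitative non-degeneracy bound on its one-sided slope then yields Lipschitz dependence of the zero level-set on $\xi$ via a DC implicit-function argument. This non-degeneracy estimate is the main obstacle: it is essentially the content of the free boundary regularity results in \cite{BroadieDetemple, Laurence}, which I would import or adapt rather than redo from scratch in the present abstract discrete-time setting.
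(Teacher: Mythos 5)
For the semicontinuity claim your argument is correct and rests on exactly the same ingredients as the paper's proof, just packaged differently: the paper fixes $\xi_n\to\xi$, picks near-minimizers $a_n$ with $A^{-1}(\xi_n,a_n)\in\calS_t$, and passes to the limit using continuity of $A^{-1}$ and closedness of $\calS_t$ from \cref{prop:closedStopRegion}; you instead note that $A(\calS_t)$ is precisely the epigraph of $f^{\diamond}(t,\cdot)$, closed as the homeomorphic image of a closed set, and invoke the closed-epigraph characterization of lower semicontinuity. Your version is arguably cleaner; the only point worth spelling out is that the epigraph of a $[0,\infty]$-valued function lies entirely in $\calE\times\R_+$, which is closed in $\calE\times\R$, so closedness relative to $\calE\times\R_+$ does give closedness of the epigraph in $\calE\times\R$, as the characterization requires.

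For the locally Lipschitz claim your route diverges from the paper's and contains a genuine gap. The paper does not pass through the value function: it cites \cite{BroadieDetemple} for the fact that convexity of $x\mapsto\varphi(t,x)$ propagates to the stopping \emph{region} $\calS_t$ itself, and local Lipschitz continuity of the graph representation $f^{\diamond}(t,\cdot)$ is then read off from the convexity of the set. Your plan --- convexity of $v^{\diamond}(t,\cdot)$ by backward induction, $\psi=v^{\diamond}-\varphi$ locally Lipschitz as a difference of convex functions, then an implicit-function argument on $\{\psi=0\}$ --- stalls exactly where you say it does: without the quantitative one-sided slope (non-degeneracy) estimate on $a\mapsto\widetilde\psi(\xi,a)$ near the boundary, the zero level set of a locally Lipschitz function can be an arbitrary closed set, so no regularity of $f^{\diamond}$ follows; that estimate is the entire content of the claim and is neither proved nor reducible to the soft convexity facts you establish. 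Note also that your induction step silently assumes $K_{t,s}$ preserves convexity, a hypothesis on the dynamics beyond what the proposition states. If you want to complete the argument in this framework, the shorter path is the paper's: obtain (or cite) convexity of $\calS_t$ and deduce the Lipschitz graph property directly from the geometry of convex sets.
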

\begin{proof}
   Fix $t\in \calT$ and prove the result for $\eta=1$. Similar arguments can be applied for the case   $\eta=-1$.  
    Let $\xi \in \calE$ and $(\xi_n) \subset \calE$ such that $\xi_n \to \xi$ and fix $\varepsilon >0$. In light of $\eqref{eq:stopBdry}$,    there exists $a_n \in \R_{+}$ such that $x_n := A^{-1}(\xi_n,a_n)\in \calS_t$ and $f^{\diamond}(t,\xi_n) \ge a_n - \varepsilon$ for every $n\in \N$.  Defining $a = \limsup_{n\to \infty} a_n,$ then 
    \begin{equation*}
        \liminf_{n\to \infty} f^{\diamond}(t,\xi_n) \ge a - \varepsilon. 
    \end{equation*}
    Next, the continuity of $A^{-1}$ yields  $\lim_{k\to \infty} x_{n_k} = x := A^{-1}(\xi,a) $ for a subsequence $(n_k)$ such that $a_{n_k} \to a$.  As $(x_{n_k}) \subset \calS_t$ and  $\calS_t$ is closed (see \cref{prop:closedStopRegion}), then  $x \in \calS_t$. Therefore $a \ge f^{\diamond}(t,\xi)$, giving  
    $\liminf_{n\to \infty} f^{\diamond}(t,\xi_n) \ge f^{\diamond}(t,\xi) - \varepsilon.$ 
   Letting $\varepsilon\to 0$ indeed shows the lower semicontinuity of  $\xi \mapsto f^{\diamond}(t,\xi)$. 
    
For the second assertion, it is shown in \cite{BroadieDetemple} that the convexity of  $x \mapsto \varphi(t,x)$ carries over to $\calS_t$.  In particular,  $f^{\diamond}$ is   locally Lipschitz continuous.
\end{proof}

In the remainder of the paper, 
to simplify the exposition we assume  that $\eta =1$, and the case $\eta = -1$ follows \emph{mutadis mutandis}. For fixed maps $(\Xi,\alpha)$ that satisfies Assumption \ref{asm:star} with $\eta =1$,  the corresponding optimal stopping boundary $f^{\diamond}(t,\xi)$ is therefore  lower semicontinuous (l.s.c.)~in $\xi$ for all $t\in \calT$, and we set 
\begin{equation}
\label{eq:frakF}
\frakF := \{f:\calT \times \ \calE \to [0,\infty], \;  f(t,\cdot) \text{ l.s.c. } \forall t \ \in \calT\}.
\end{equation}
We associate to each $f\in \frakF$ the \emph{stopping region}
\begin{equation}
    \frakS_t(f) = \{x\in \calX \ : \ \alpha(x) \ge f(t,\Xi(x)) \},
\end{equation}
and  the \emph{hitting time}
$\tau_f = \inf\{t \in \calT  :  X_t \in \frakS_t(f)\}$.  Assumption \ref{asm:star} implies in particular that it is enough to consider hitting times of  boundaries to achieve the optimal value in  \eqref{eq:OS}. Namely,
$$
v^{\diamond} = \sup_{f\in \frakF}v(\tau_f)  = v(\tau_{f^{\diamond}}),
$$  
as already shown in \cite[Lemma 2.3]{ReppenSonerTissotFB}. Problem \eqref{eq:OS} is therefore  equivalent to finding $f\in \frakF$ such that $\frakS_t(f) = \calS_t$ for all $t \in \calT$. 

\subsection{Financial examples}\label{sec:finEx}
Suppose that $X = (X^1,\ldots,X^m)$ describes the price process of $m\ge 2$ financial assets. The state space is  therefore  $\calX = \R^m_+$.   Moreover,  consider payoff functions of the form 
\begin{equation}
\varphi(t,x) = e^{-rt}(\alpha(x)-\kappa)^+,
\end{equation}
where $r\ge 0$ is a constant interest rate, $\kappa>0$ is the \textit{strike} of the option,  and $\alpha:\calX\to [0,\infty]$ is a statistic of the underlying prices.  
For this class of contracts,    Assumption \ref{asm:star} holds  with  $\eta=1$, and the homeomorphism $A = (\Xi,\alpha)$,  $\Xi(x)=\frac{x}{\alpha(x)}$. See Theorem 4.1 in \cite{ReppenSonerTissotFB} for a  complete  statement. 

\begin{example}\label{ex:maxcall}
     Consider a \textit{max-call option}, which is obtained by choosing  $\alpha(x)  = \max_{i=1,...,m}x^i$. Hence $\varphi$ is convex in $x$, and \cref{prop:lscBdry} tells us that the stopping boundary is locally Lipschitz continuous.   Moreover, we have the latent space   $\calE =  \{\xi \in [0,1]^m  :  \max_{i=1,...,m}\xi_i=1  \}$; see  Assumption \ref{asm:star}. In particular, it is compact. Interestingly, the stopping region consists of $m$ connected components, as can be seen in \cref{fig:X} in the case of $m=2$ assets. The right panel (\cref{fig:homeo}) displays  the image through $(\alpha,\Xi)$ of the upper connected component of the stopping region in \cref{fig:X}. Although $\calE$ is a subset of $[0,1]^2$, we can    identify $\Xi(x) = (\frac{x_1}{x_2},1)$ with its first component when $x_2 \ge x_1$, as depicted in \cref{fig:homeo}.  

     Max-call options are popular benchmark examples for numerical methods addressing high-dimensional optimal stopping problems; see \cite[Section 8]{ReppenSonerTissotFB} and the references therein. Indeed,  the maximum function in the payoff makes the problem  truly high-dimensional in the sense that the process $\alpha(X)$ cannot be replaced by a one-dimensional diffusion with identical law. 
\end{example}

\begin{example} \label{ex:minmax} 

If $\alpha(x) = \min_{i=1,...,m}x^i$, then the associated contract is  a so-called \textit{min-call option}.  
Note that the payoff $\varphi(t,x)$ is \textit{not} convex in $x$, leading to pathological properties of the stopping region prior to maturity as  
 studied by \citet{DetempleMin} in the case $m=2$. In particular,   $\partial \calS_t$ contains a nonempty singular set given by a segment on the diagonal $\{x_1 = x_2\}$; see \cref{fig:minX}.
Consequently, the boundary $\xi \mapsto f^{\diamond}(t,\xi)$ is discontinuous at $\xi = (1,1)$, hence strictly (lower) semicontinuous; see \cref{fig:minae}. 
\end{example}

\begin{figure}[H]
\caption{Stopping region of a min-call option on two symmetric assets (one time section). We note that the stopping boundary is discontinuous at $\Xi(x) = (1,1)$. }
\vspace{-2mm}
\begin{subfigure}[b]{0.49\textwidth}
    \centering
    \caption{Stopping region in $\calX$}
    \includegraphics[height=2.0in,width=2.3in]{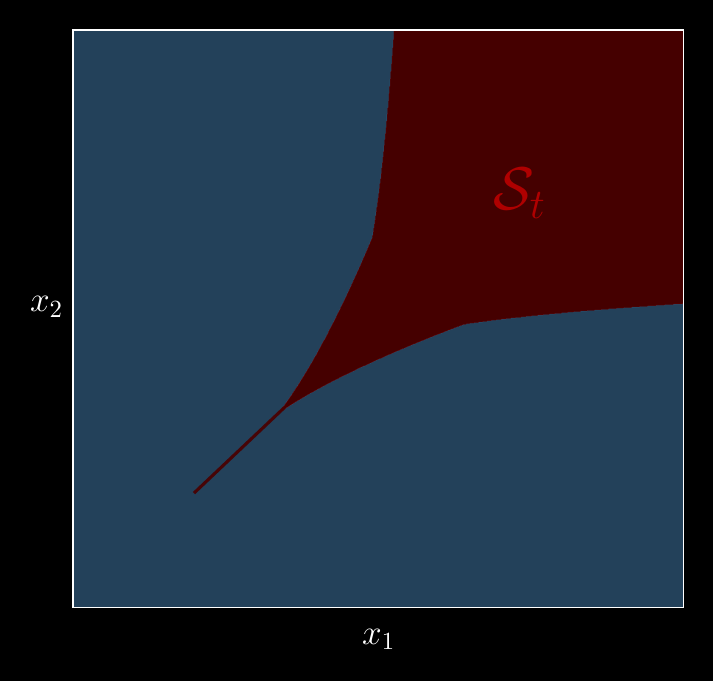}
    \label{fig:minX}
\end{subfigure}
\begin{subfigure}[b]{0.49\textwidth}
    \centering
    \caption{Stopping region through $(\alpha,\Xi)$}
    \includegraphics[height=2.0in,width=2.3in]{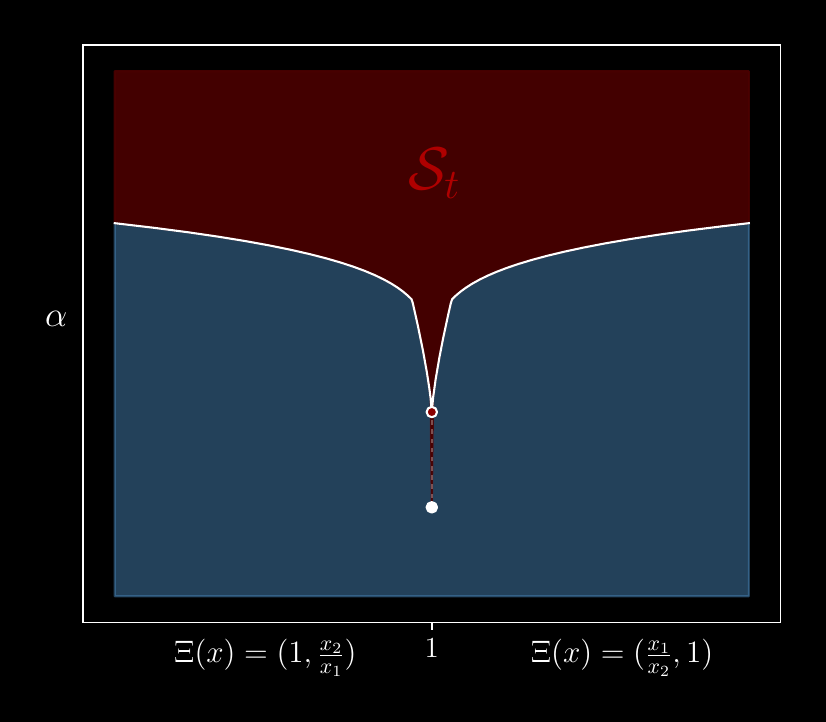}
    \label{fig:minae}
\end{subfigure}
\label{fig:mincall}
\end{figure}  
\section{Relaxed stopping rules}\label{sec:relaxation}


To find an optimal boundary with stochastic gradient techniques as  in \cref{sec:NOSB}, the optimal stopping problem needs to be relaxed. Indeed, the map $f \mapsto \tau_f(\omega)$ is piecewise constant for $\Q$-almost every $\omega \in \Omega$ as the hitting time of $f$, taking values in the discrete set $\calT$,   is typically unchanged when  the stopping  boundary varies slightly. This in turn leads to severe vanishing gradient problems. We overcome this difficulty by approximating $\tau_f(\omega)$ by a sequence of relaxed  stopping rules based on \textit{fuzzy regions} parametrized by $\varepsilon>0$, 
\begin{equation}\label{eq:fuzzy1S}
\frakB_t^{\varepsilon}(f) := \frakS_t(f-\varepsilon) \setminus \frakS_t(f) = \{x\in \calX \ : \ d(t,x;f) \in (0, \varepsilon] \}, 
\end{equation}
where $d:\calT\times \calX \times \frakF \to [0,\infty]$ is the distance from $x$ to $\frakS_t(f)$ in the coordinate system $(\Xi,\alpha)$,
\begin{equation}\label{eq:distFct}
    d(t,x;f) =   (f(t,\Xi(x)) - \alpha(x))^{+}, \quad (t,x,f)\in \calT \times \calX \times \frakF,  
\end{equation} 
and an illustration is given in the left panel of \cref{fig:fuzzy&phaseIndicator}. 

We continue by first defining the general relaxed stopping rules, and then  the specific one induced by the fuzzy region. Let $\calP(\calT)$ be the set of probability measures on $\calT$ and $\underline{\vartheta}(\calT)$ denotes the set of $\calP(\calT)-$valued  random variables $$\mu(\omega) = \sum_{t\in \calT} P_t(\omega) \delta_t, \quad \omega \in \Omega, \qquad (\delta_t: \text{Dirac measure at $t$})$$ 
such that the process $(P_t)_{t\in \calT}$ is  $\F$-adapted. 
In this context, $\underline{\vartheta}(\calT)$ is the set of \textit{relaxed} (or \textit{randomized}) $\F$-stopping times taking values in $\calT$. If we identify $\tau \in \vartheta(\calT)$ with 
$\delta_{\tau} \in \underline{\vartheta}(\calT)$, then   $\underline{\vartheta}(\calT)$ contains all  $\F$-stopping times. Indeed, this procedure is effectively  a \textit{convexification} of the stopping times as every relaxed stopping rule is  a random convex combination of the Dirac measures $(\delta_t)_{t\in \calT}$.  
We can now define the value functional
\begin{equation}\label{eq:relaxValue}
v(\mu) = \E^{\Q}\left[\int_{\calT} \varphi(t,X_{t}) \mu(dt) \right],\quad  \mu \in \underline{\vartheta}(\calT).
\end{equation}
The \emph{relaxed optimal stopping problem} amounts to maximizing $v(\mu)$ over all $\mu \in \underline{\vartheta}(\calT)$. 
Importantly,  the  relaxed  problem and classical one   \eqref{eq:OS} achieve the same optimal value as we prove in the next proposition. 
Similar results in continuous time can be found in \cite{Gyongy}, \cite{Hobson} and \cite[Lemma 1.5.2]{Krylov}. 

\begin{proposition}\label{prop:relaxOS}
Let $v^{\diamond}$ be the  value of the classical optimal stopping problem \eqref{eq:OS}. 
Then, 
\begin{equation}\label{eq:OSvsRelax}
      \sup_{\mu \in \underline{\vartheta}(\calT)} v(\mu)  = v^{\diamond}.
\end{equation}
 Moreover, any maximizer $\mu^{\diamond}$
 of $ \mu \mapsto v(\mu)$ is a  stopping time, i.e. $\mu^{\diamond} = \delta_{\tau^{\diamond}} $ for some $\tau^{\diamond} \in \vartheta(\calT)$. 
\end{proposition}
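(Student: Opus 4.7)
The plan is to sandwich $\sup_{\mu \in \underline{\vartheta}(\calT)} v(\mu)$ between $v^{\diamond}$ and itself, and then extract a classical optimum from any relaxed maximizer.

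The inequality $v^{\diamond} \le \sup_{\mu} v(\mu)$ is immediate: the embedding $\vartheta(\calT) \ni \tau \mapsto \delta_{\tau} \in \underline{\vartheta}(\calT)$ preserves the functional, since $v(\delta_{\tau}) = \E^{\Q}[\varphi(\tau,X_{\tau})] = v(\tau)$.

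For the reverse inequality, the key idea is a quantile decomposition of any relaxed rule into a one-parameter family of ordinary stopping times. Given $\mu = \sum_{t\in \calT} P_t \delta_t \in \underline{\vartheta}(\calT)$, I would introduce the $\F$-adapted cumulative process $G_t := \sum_{s \le t,\, s\in \calT} P_s$ and, for each $u \in (0,1]$, the generalized inverse
$$\tau_u := \min\{t \in \calT : G_t \ge u\}.$$
Since $\{\tau_u \le t\} = \{G_t \ge u\} \in \calF_t$, each $\tau_u$ is an $\F$-stopping time. The layer-cake identity $P_t = \int_0^1 \mathbf{1}_{\{\tau_u = t\}}\, du$ combined with Fubini (justified by Assumption~\ref{asm:growth}, which ensures $\max_{t\in \calT}|\varphi(t,X_t)|\in L^1(\Q)$, and by the finiteness of $\calT$) then gives
$$v(\mu) = \E^{\Q}\Big[\sum_{t\in \calT} \varphi(t,X_t) P_t \Big] = \int_0^1 \E^{\Q}[\varphi(\tau_u,X_{\tau_u})]\, du = \int_0^1 v(\tau_u)\, du \le v^{\diamond}.$$
Taking the supremum over $\mu$ yields \eqref{eq:OSvsRelax}.

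For the second assertion, I apply the same decomposition to a maximizer $\mu^{\diamond}$: the identity $\int_0^1 v(\tau_u^{\diamond})\, du = v^{\diamond}$ together with the pointwise bound $v(\tau_u^{\diamond})\le v^{\diamond}$ forces $v(\tau_u^{\diamond})=v^{\diamond}$ for Lebesgue-a.e.\ $u \in (0,1]$, so selecting any such $u$ exhibits a classical optimal stopping time $\tau^{\diamond}\in \vartheta(\calT)$ and shows that $\delta_{\tau^{\diamond}}$ is a Dirac maximizer of $v$. The main technical points I anticipate are the measurability bookkeeping in the Fubini step and in the section $u \mapsto \tau_u^{\diamond}$; both are routine in this finite-horizon discrete setting, which is why I do not expect a substantial obstacle beyond writing out the indicator decomposition carefully.
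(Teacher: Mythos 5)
Your proof of the identity \eqref{eq:OSvsRelax} is correct but follows a genuinely different route from the paper. The paper argues by backward induction on $|\calT|$: it peels off the first exercise date, bounds the conditional continuation value of the relaxed rule by the classical one via the induction hypothesis, and observes that the resulting optimization over $P_{t_1}\in[0,1]$ is linear, hence attained at a $\{0,1\}$-valued indicator. You instead disintegrate an arbitrary relaxed rule $\mu=\sum_t P_t\delta_t$ into the one-parameter family of quantile stopping times $\tau_u=\min\{t: G_t\ge u\}$, check adaptedness via $\{\tau_u\le t\}=\{G_t\ge u\}\in\calF_t$, and use the layer-cake identity $P_t=\int_0^1 \mathbf{1}_{\{\tau_u=t\}}\,du$ (valid since $G$ terminates at $1$) together with Fubini to write $v(\mu)=\int_0^1 v(\tau_u)\,du\le v^{\diamond}$. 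This is a clean, non-inductive argument that makes the convexification transparent: every relaxed rule is literally a mixture of ordinary stopping times, and $v$ is affine along that mixture. The paper's induction buys the dynamic programming equation $v^{\diamond}=\E^{\Q}[\varphi(t_1,X_{t_1})\vee v_1^{\diamond}(X_{t_1})]$ as a by-product; your decomposition buys an explicit selection of classical stopping times from any relaxed rule. Both establish \eqref{eq:OSvsRelax} rigorously.

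There is, however, a gap in your treatment of the second assertion. The proposition claims that \emph{any} maximizer $\mu^{\diamond}$ is a Dirac mass $\delta_{\tau^{\diamond}}$, whereas your argument only shows that $v(\tau_u^{\diamond})=v^{\diamond}$ for a.e.\ $u$, i.e.\ that \emph{some} (indeed, a full-measure family of) classical optimal stopping times exist and hence that a Dirac maximizer exists. That is strictly weaker: by affinity of $\mu\mapsto v(\mu)$, if $\tau_1\neq\tau_2$ are both optimal then $\tfrac12(\delta_{\tau_1}+\delta_{\tau_2})$ is a maximizer which is not a Dirac mass, so knowing that all the quantile sections of $\mu^{\diamond}$ are optimal does not force $\mu^{\diamond}$ itself to be degenerate. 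To get the stated conclusion one needs the pointwise analysis in the paper's inductive step, where the maximization of $\E^{\Q}[P_{t_1}(\varphi(t_1,X_{t_1})-v_1^{\diamond}(X_{t_1}))]$ over $\calF_{t_1}$-measurable $P_{t_1}\in[0,1]$ is claimed to have the unique $\{0,1\}$-valued solution $P_{t_1}^{\diamond}$ (which itself implicitly requires that the indifference event $\{\varphi(t_1,X_{t_1})=v_1^{\diamond}(X_{t_1}),\,\varphi(t_1,X_{t_1})\ \text{not the unique optimum}\}$ be handled, since on that event any $P_{t_1}$ is optimal). You should either add this pointwise argument or explicitly weaken your conclusion to the existence of an optimal $\tau^{\diamond}\in\vartheta(\calT)$, which is all your quantile decomposition delivers.
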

\begin{proof}
See \cref{app:relaxedproblem}.
\end{proof}
  
We next construct a relaxed stopping rule from a given boundary $f\in \frakF$ and a fuzziness parameter $\varepsilon>0$. Set $\calT_{t-} = \calT \cap \  [0,t)\ $ for $t>0$ and $\calT_{0-} = \varnothing$, and recursively define the $[0,1]-$valued  stochastic process $P^{\varepsilon}_f = (P_{f,t}^{\varepsilon})_{t\in \calT}$  representing  stopping probabilities by,
\begin{align}\label{eq:stopProb}
     P^{\varepsilon}_{f,t} = \underbrace{p_{f,t}^{\varepsilon}}_{\text{stop at $t$}} \ \ \underbrace{\Big(1- \sum_{s \in \calT_{t-}} P_{f,s}^{\varepsilon}  \Big)}_{\text{continue until $t$}}, \qquad 
     p_{f,t}^{\varepsilon} = (\chi^{\varepsilon}\circ d)(t,X_t;f), 
\end{align}
with the relaxed phase indicator function $\chi^{\varepsilon}(\delta) = (1 - \delta/\varepsilon)^+ \wedge 1$. 
Specifically, $\chi^{\varepsilon}$ is equal to one in the stopping region $\frakS_t(f)$, it is linearly decreasing in the fuzzy region $\frakB_t^{\varepsilon}(f)$, and is zero otherwise. The right panel of \cref{fig:fuzzy&phaseIndicator} displays the composition $\chi^{\varepsilon} \circ d$ in the space $\Xi(\calX) \times \alpha(\calX)$. 
We can now define the relaxed stopping rule associated to $f$ as 
\begin{equation}\label{eq:relaxedRule}
    \mu^{\varepsilon}_f(\omega) = \sum_{t\in \calT} P^{\varepsilon}_{f,t}(\omega) \  \delta_{t}. 
\end{equation}

\begin{figure}[t]
\caption{Left panel: fuzzy region (purple) separating the interior of the continuation region (blue) from the stopping region (red). Right panel: Illustration of $\chi^{\varepsilon} \circ d(\cdot; f)$ in the coordinates $(\Xi,\alpha)$, where $d(\cdot; f)$ is the distance function to $\frakS_t(f)$ and  $\chi^{\varepsilon}(\delta) = (1 - \delta/\varepsilon)^+ \wedge 1$.} 
\vspace{-1mm}
\begin{subfigure}[b]{0.49\textwidth}
    \centering
    \includegraphics[height=2.0in,width=2.5in]{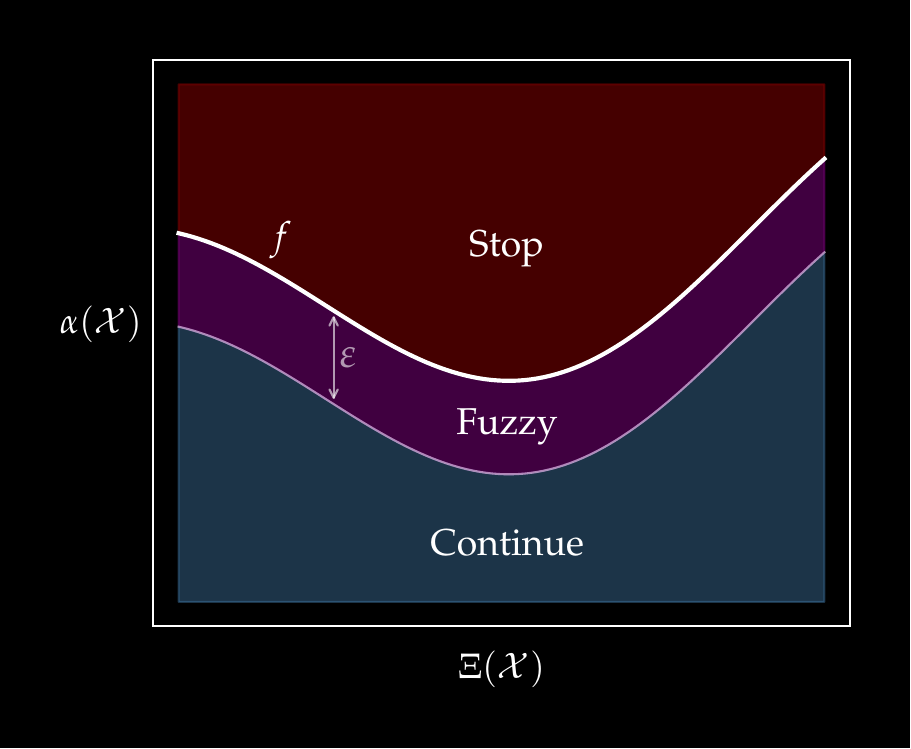}
    \label{fig:fuzzybrdy}
\end{subfigure}
\begin{subfigure}[b]{0.49\textwidth}
    \centering
         \includegraphics[height = 2.0in,width = 2.5in]{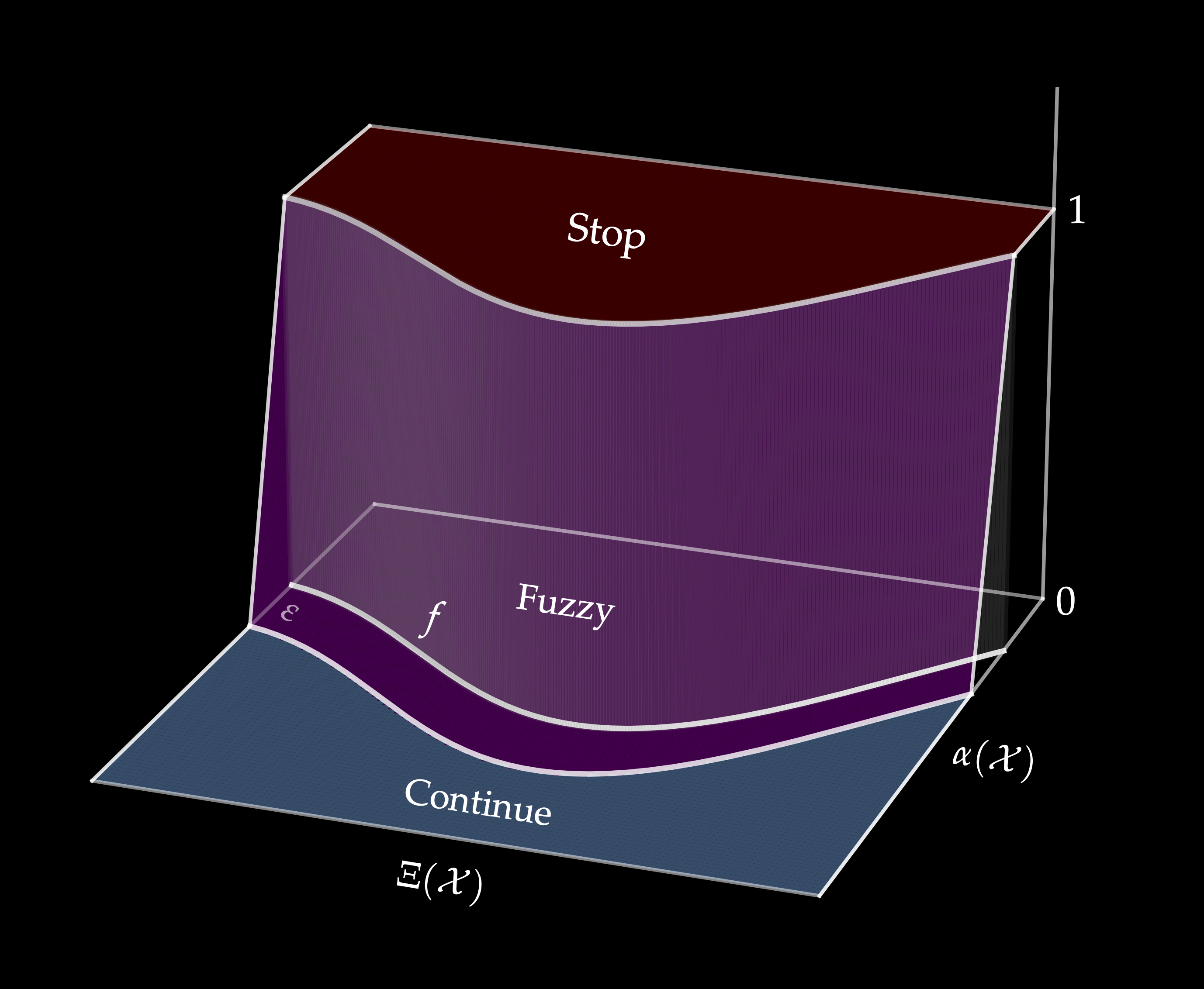}
     \label{fig:hFunction}
\end{subfigure}
\label{fig:fuzzy&phaseIndicator}
\end{figure}  
\noindent
Using \eqref{eq:stopProb}, we directly verify that $\mu_f^{\varepsilon}(\omega)(\calT) = 1$. Additionally, as for each $t$, $P^{\varepsilon}_{f,t}$ depends only on $\{X_s \ : \ s \in \calT\cap \  [0,t] \}$, $P^{\varepsilon}_{f}$ is adapted to $\F$. Hence,  $\mu_f^{\varepsilon}$ is a relaxed $\F-$stopping time.

To complete the approximation, in \cref{cor:relaxUnif} we prove the following convergence which was already argued in \cite[Lemma B.1]{ReppenSonerTissotFB},
\begin{equation}\label{eq:relaxConv}
    v(\mu_{f}^{\varepsilon}) \longrightarrow v(\tau_{f}) \quad \text{as } \ \varepsilon \downarrow 0.
\end{equation}
That is, the value of the relaxed stopping rule  $\mu_f^{\varepsilon}$ converges to the expected reward  from  the hitting time of $f$ when the fuzzy parameter $\varepsilon$ decreases to zero. 

The above result is  a byproduct of the continuity of  $f\mapsto v(\tau_f)$, proved in the next section.   
Towards our main results \cref{thm:unifcontV,thm:lscV}, we start with a  
continuity lemma for relaxed stopping rules with respect to  
the \textit{total variation distance} of probability measures, 
$$  \textnormal{TV}(\mu,\mu') = \frac{1}{2}\sum_{t\in \calT} |\mu(\{t\}) - \mu'(\{t\})|\in [0,1], \qquad \mu, \mu' \in \calP(\calT).$$
Note that when  $\mu,\mu' \in \underline{\vartheta}(\calT)$, then  $\textnormal{TV}(\mu,\mu')$ is the random variable  $ \omega \mapsto \textnormal{TV}(\mu(\omega),\mu'(\omega))$. 
\begin{lemma}\label{lem:TV}
Under Assumption \ref{asm:growth}, there exists $C \in (0,\infty)$ depending on  $\varphi$ such that 
\begin{equation}
    |v(\mu)-v(\mu')| \le C \ \lVert \textnormal{TV}(\mu,\mu') \rVert_{L^2(\Q)}, \quad \forall \ \mu,\mu' \in \underline{\vartheta}(\calT). 
\end{equation}
In particular, 
$|v(\tau)-v(\tau')| \le C \Q(\tau \ne \tau')^{1/2}$  for any stopping times $\tau, \tau' \in  \vartheta(\calT)$.
\end{lemma}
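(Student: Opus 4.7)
My plan is to prove the inequality by writing $v(\mu)-v(\mu')$ as a single expectation of the signed-measure action, then bounding the integrand pointwise on $\Omega$ before applying Cauchy--Schwarz. Since $\mu(\omega),\mu'(\omega)$ are discrete probability measures supported on the finite set $\calT$, definition \eqref{eq:relaxValue} immediately yields
\[
v(\mu)-v(\mu') \;=\; \E^{\Q}\Bigl[\sum_{t\in\calT}\varphi(t,X_t)\bigl(\mu(\{t\})-\mu'(\{t\})\bigr)\Bigr].
\]
The first step is a pointwise bound in $\omega$: by the triangle inequality the summand is dominated by $\max_{t\in\calT}|\varphi(t,X_t)|\cdot\sum_{t\in\calT}|\mu(\{t\})-\mu'(\{t\})|$, and the latter sum is exactly $2\,\textnormal{TV}(\mu,\mu')$ by definition of the total variation distance on a discrete space.

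Next I would take expectation and apply Cauchy--Schwarz in $L^2(\Q)$ to decouple the payoff from the measure,
\[
|v(\mu)-v(\mu')| \;\le\; 2\,\bigl\|\max_{t\in\calT}|\varphi(t,X_t)|\bigr\|_{L^2(\Q)}\cdot \bigl\|\textnormal{TV}(\mu,\mu')\bigr\|_{L^2(\Q)},
\]
so the constant required by the lemma is simply $C:=2\,\|\max_t|\varphi(t,X_t)|\|_{L^2(\Q)}$, depending only on $\varphi$ (and the law of $X$). For the second assertion I would identify $\tau,\tau'\in\vartheta(\calT)$ with their Dirac measures $\delta_\tau,\delta_{\tau'}$ and observe that $\textnormal{TV}(\delta_\tau,\delta_{\tau'})(\omega)=\mathbf{1}_{\{\tau(\omega)\ne\tau'(\omega)\}}$, whose $L^2(\Q)$-norm equals $\Q(\tau\ne\tau')^{1/2}$; plugging in finishes this part.

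The only genuinely delicate point is the finiteness of $C$, which is slightly stronger than the $L^1$ hypothesis guaranteeing well-posedness of \eqref{eq:OS}. Under \cref{asm:growth} one has $|\varphi(t,x)|\le c(1+|x|^a)$, so $\|\max_t|\varphi(t,X_t)|\|_{L^2(\Q)}<\infty$ as soon as $X$ admits a finite $2a$-moment at every $t\in\calT$; since $\calT$ is finite and the transition semigroup preserves polynomial growth (apply the $K_{t,s}$-invariance of $\calL_{2a}$ to the bound $1+|\cdot|^{2a}$), this reduces to integrability of $|X_0|^{2a}$, which I would flag explicitly as part of the standing assumptions. Beyond this moment check, the argument is just the clean splitting via Cauchy--Schwarz described above.
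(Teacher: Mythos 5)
Your argument is correct and follows essentially the same route as the paper: bound the signed-measure action pointwise by (an envelope of $|\varphi(t,X_t)|$) times $2\,\textnormal{TV}(\mu,\mu')$, apply Cauchy--Schwarz, and use $\textnormal{TV}(\delta_\tau,\delta_{\tau'})=\mathds{1}_{\{\tau\ne\tau'\}}$ for the second claim; the paper uses the envelope $\sum_{t}|\varphi(t,X_t)|$ where you use $\max_t|\varphi(t,X_t)|$, and its finiteness check via $\calL_{2a}$-invariance of the semigroup is the one you describe (with $X_0=x_0$ deterministic, so the extra moment condition you flag is automatic there).
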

\begin{proof}
      Define  $\Phi =  \sum_{t\in \calT} |\varphi(t,X_{t})| $ and  
      recall from Assumption \ref{asm:growth} that   $\varphi(t,\cdot) \in  \calL_a$ for some $a>0$. Since  $\phi 
    \in \calL_{a}$ implies $\phi^2 
    \in \calL_{2a}$, we obtain for some constant $C\in (0,\infty)$ that   
$$\E[\Phi^2] \le |\calT|  \sum_{t\in \calT} \E^{\Q}[\varphi(t,X_{t})^2] =   |\calT|  \sum_{t\in \calT} \underbrace{K_{0,t}\varphi^2(t,x_0)}_{\in \  \calL_{2a}} \le  C\sum_{t\in \calT}  [1+|x_0|^{2a}] < \infty. $$ 
Thus  $\Phi \in L^2(\Q)$. Next, observe that 
\begin{align*}
    |v(\mu) - v(\mu')| \le \E^{\Q}\left[  \int_0^T |\varphi(t,X_{t})|\  |\mu-\mu'|(dt) \right]
    \le 2 \E^{\Q}\left[  \Phi\  \text{TV}(\mu,\mu') \right].
\end{align*}
As $\text{TV}(\mu,\mu') \le 1 \in L^2(\Q)$, the result follows from the Cauchy-Schwarz inequality and  setting $C =  2 \lVert\Phi \rVert_{L^{2}(\Q)}$. For the second claim, it suffices to observe that  $\text{TV}(\delta_{\tau},\delta_{\tau'}) = \mathds{1}_{\{\tau \ne \tau'\}}$. 
\end{proof}

\begin{remark} Set $\overline{\textnormal{TV}}_2(\mu,\mu') := \lVert \textnormal{TV}(\mu,\mu') \lVert_{L^2(\Q)}$ for relaxed stopping rules $\mu,\mu' \in  \underline{\vartheta}(\calT)$. Then,  \cref{lem:TV} states that $ \mu \mapsto v(\mu)$ is Lipschitz continuous with respect to $\overline{\textnormal{TV}}_2$. 
\end{remark}
\section{Continuity of the value functional}\label{sec:continuity}
First, we {\rr{state}} a distributional assumption on  the process  $\alpha(X) = (\alpha(X_t))_{t\in \calT}$. 
\begin{asm}
\label{asm:absCont}
For every $t\in \calT$, the conditional law of $\alpha(X_t)$  given $\Xi(X_t)$
is absolutely continuous with respect to the Lebesgue measure,
and admits  a  modulus of continuity $\rho_t:[0,\infty) \to [0,1]$ satisfying,
$$
 \Q(\alpha(X_t)\in [a,a+\iota] \ \large{|}\ \Xi(X_t)) \le \rho_t(\iota), \qquad \Q-a.s, \; \forall a, \iota \ge 0.
 $$
\end{asm}
\begin{example}  Let us verify  Assumption \ref{asm:absCont} for max-call options on $d=2$ assets seen in \cref{ex:maxcall}, namely $\alpha(x) = x_1\vee x_2$ and $\Xi(x) = \frac{x}{\alpha(x)}$. We also suppose that  the  law of $X_t = (X_t^1,X_t^2)$ has no atoms and denote the joint density of $X_t$ by $\phi_t$. If  $\Xi(X_t) \in \calE_b:= \{(1,\xi_2) : \xi_2 \le b \}$, then $\alpha(X_t) = X_t^1$ so that 
\begin{align*}
    \Q(\alpha(X_t)\le a \ \large{|}\ \Xi(X_t) \in \calE_b ) = \Q(X_t^1\le a \ \large{|}\ X_t^2 \le b X_t^1 ) 
    = \frac{1}{\Q(X_t^2 \le b X_t^1)} \int_0^a\int_{0}^{bx_1} \phi_t(x_1,x_2)dx_2dx_1.
\end{align*}
Hence  for all $b\in \R$ such that $\Q(X_t^2 \le b X_t^1)>0$, we have  $\Q(\alpha(X_t)\in [a,a+\iota] \ \large{|}\ \Xi(X_t) \in  \calE_b) \le \rho_t(\iota)$ as claimed. 
\end{example}
We are now ready to  state our first  continuity result. Throughout, we adopt the notation $\calK \subset \subset \calE$ if $\calK$ is a compact subset of $\calE$. 
\begin{theorem}\label{thm:unifcontV}
   Let  $C \in (0,\infty)$ be as in  \cref{lem:TV} and fix $\delta > 0$. Then,
    under Assumptions \ref{asm:absCont},  
    there exists a compact set $\calK \subset \subset \calE$  such that for all $\iota >0$ and  $f,f'\in \frakF$, 
    \begin{equation}\label{eq:unifcontV}
        \lVert f - f' \rVert_{L^{\infty}(\calT \times \calK)}
        \le \iota \ \Longrightarrow \ |v(\tau_{f}) - v(\tau_{f'})|\le C\sqrt{\rho (\iota) + \delta},
    \end{equation}
where $\rho_t$ is the modulus of continuity of 
$a \mapsto \Q(\alpha(X_t) \in [0,a] \ | \ \Xi(X_t))$  and
$\rho(\iota) = \sum_{t\in\calT} \rho_t(\iota) $.  If $\calE$ is compact, 
then $f \mapsto v(\tau_f)$ is uniformly continuous in  the supremum distance.
\end{theorem}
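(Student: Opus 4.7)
The plan is to combine \cref{lem:TV} with a union bound over the symmetric differences of the two stopping regions. Invoking \cref{lem:TV} with $\tau = \tau_f$ and $\tau' = \tau_{f'}$, the claim reduces to establishing the probabilistic estimate $\Q(\tau_f \ne \tau_{f'}) \le \rho(\iota) + \delta$. The pointwise observation driving everything is that if $\tau_f(\omega) \ne \tau_{f'}(\omega)$, then at $t = \tau_f(\omega) \wedge \tau_{f'}(\omega) \in \calT$ the state $X_t$ must lie in the symmetric difference $\frakS_t(f) \triangle \frakS_t(f')$; otherwise the two policies would agree at that instant. Consequently,
$$\{\tau_f \ne \tau_{f'}\} \ \subseteq \ \bigcup_{t \in \calT}\{X_t \in \frakS_t(f) \triangle \frakS_t(f')\},$$
and a union bound over the finitely many times $t$ reduces the task to controlling each term on the right.

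I would pick the compact set $\calK$ first, independently of $\iota$ and of $f, f'$. Since $\calT$ is finite and each pushforward $\Q \circ \Xi(X_t)^{-1}$ is a Borel probability measure on the Polish space $\calE$, inner regularity yields $\calK \subset \subset \calE$ with $\Q(\Xi(X_t) \notin \calK) \le \delta/|\calT|$ for every $t \in \calT$. On the event $\{\Xi(X_t) \in \calK\}$, the hypothesis $\lVert f - f' \rVert_{L^\infty(\calT \times \calK)} \le \iota$ forces $|f(t, \Xi(X_t)) - f'(t, \Xi(X_t))| \le \iota$, and unpacking the definition of $\frakS_t(\cdot)$ from \cref{asm:star} gives the containment
$$\{X_t \in \frakS_t(f) \triangle \frakS_t(f')\} \cap \{\Xi(X_t) \in \calK\} \ \subseteq \ \{|\alpha(X_t) - f(t, \Xi(X_t))| \le \iota\}.$$
That is, on $\calK$ the symmetric-difference event is confined to a thin slab of width $\iota$ around the graph of $f(t, \cdot) \circ \Xi$.

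The main obstacle is bounding the probability of this slab by $\rho_t(\iota)$. The subtlety is that \cref{asm:absCont} supplies a modulus of continuity $\rho_t$ for the \emph{marginal} CDF of $\alpha(X_t)$, which directly controls $\Q(\alpha(X_t) \in [a, a+\iota])$ only when the level $a$ is deterministic, whereas here $a = f(t, \Xi(X_t))$ is random. The natural remedy is to condition on $\Xi(X_t) = \xi$: the level $f(t, \xi)$ then becomes deterministic, and using that the conditional distribution of $\alpha(X_t)$ inherits an absolutely continuous structure with the same modulus $\rho_t$, the conditional probability of the slab is at most $\rho_t(\iota)$ uniformly in $\xi$, so that integrating out $\Xi(X_t)$ preserves the bound. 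Combining the tightness contribution on $\calK^c$ with the slab estimate on $\calK$ and summing over $t \in \calT$ yields
$$\Q(\tau_f \ne \tau_{f'}) \ \le \ \sum_{t \in \calT}\bigl(\rho_t(\iota) + \delta/|\calT|\bigr) \ = \ \rho(\iota) + \delta,$$
and a final application of \cref{lem:TV} delivers $|v(\tau_f) - v(\tau_{f'})| \le C\sqrt{\rho(\iota) + \delta}$, as desired.
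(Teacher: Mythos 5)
Your proposal follows the paper's proof almost step for step: the reduction via \cref{lem:TV} to bounding $\Q(\tau_f\ne\tau_{f'})$, the inclusion of $\{\tau_f\ne\tau_{f'}\}$ in the union over $t$ of the symmetric-difference events, the choice of $\calK$ by tightness of the laws of $\Xi(X_t)$, and the observation that on $\{\Xi(X_t)\in\calK\}$ the event $\{X_t\in\frakS_t(f)\triangle\frakS_t(f')\}$ is confined to a slab of width controlled by $\iota$ around the graph of $f(t,\cdot)\circ\Xi$. Two remarks on the details.

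First, the step you rightly single out as the ``main obstacle'' is resolved in your write-up by asserting that the conditional law of $\alpha(X_t)$ given $\Xi(X_t)=\xi$ is absolutely continuous with the \emph{same} modulus $\rho_t$, uniformly in $\xi$. That is strictly stronger than \cref{asm:absCont}, which only concerns the marginal law of $\alpha(X_t)$: absolute continuity of a marginal says nothing about the regularity of the conditional laws (the joint law of $(\Xi(X_t),\alpha(X_t))$ could concentrate on a graph, in which case the slab $\{|\alpha(X_t)-f(t,\Xi(X_t))|\le\iota\}$ can have probability close to $1$ even though each deterministic interval of length $\iota$ has probability at most $\rho_t(\iota)$). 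So as written this step does not follow from the stated hypothesis. For what it is worth, the paper's own proof applies the bound $\Q(\alpha(X_t)\in[a,a+\iota))\le\rho_t(\iota)$ directly to the random interval $I_{f,f'}(t,\Xi(X_t))$ without conditioning, which carries exactly the same implicit burden; you have reproduced the argument faithfully, identified its weakest link, and then closed it by fiat rather than from \cref{asm:absCont}.

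Second, a minor quantitative point: your slab is the two-sided set $\{|\alpha(X_t)-f(t,\Xi(X_t))|\le\iota\}$, an interval of length $2\iota$, which yields $\rho_t(2\iota)$ rather than $\rho_t(\iota)$. The paper instead keeps the one-sided interval $I_{f,f'}(t,\xi)=[(f\wedge f')(t,\xi),(f\vee f')(t,\xi))$, whose length is at most $\iota$ when $\lVert f-f'\rVert_{L^\infty(\calT\times\calK)}\le\iota$, and this is what produces the constant $\rho(\iota)$ appearing in the statement.
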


\begin{remark}\label{rem:bdedDensity}
If Assumption \ref{asm:absCont} holds and for all $t\in \calT$, 
 the density $\phi_t(x)dx := \Q(\alpha(X_t)\in dx)$  is  bounded,  then the cumulative distribution function of $\alpha(X_t)$ is Lipschitz continuous. In particular, its modulus of continuity satisfies $\rho_t(\iota) \le  \lVert \phi_t \rVert_{L^{\infty}(\R)} \iota$. Defining $c = \sum_{t\in \calT}\lVert \phi_t \rVert_{L^{\infty}(\R)}$, we can thus rewrite the right side of \eqref{eq:unifcontV}   as 
    \begin{equation} \label{eq:valueLip}
       \lVert f - f' \rVert_{L^{\infty}(\calT \times \calK)}\le \iota \; \Longrightarrow \; |v(\tau_{f}) - v(\tau_{f'})|\le C\sqrt{c \ \iota + \delta}. 
    \end{equation}
\end{remark}

\begin{figure}[t]
    \centering
    \caption{Illustration of  
    the symmetric difference between the stopping regions $\frakS_t(f)$ and  $\frakS_t(f')$ (purple region) contained in the interval $[f \wedge f',f \wedge f'+\iota)$ (orange region); see proof of \cref{thm:unifcontV}. The stopping decisions with respect to $f$ and $f'$ coincide in the red (stop) and blue (continue) region. }
    \vspace{-1mm}
    \includegraphics[width = 2.7in,height = 2.0in]{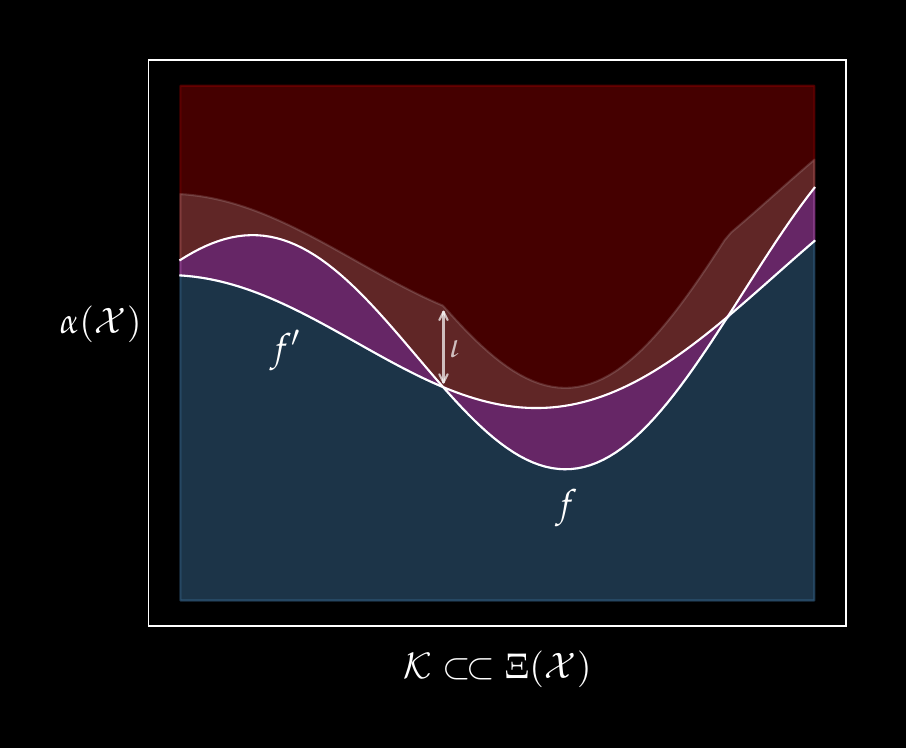}
    
    \label{fig:symdiff}
\end{figure}

\begin{proof} (\cref{thm:convCont}) \underline{\textit{Step ${1}$.}} 
 Since $\calE = \Xi(\calX)$   is locally compact (being the continuous image of a Euclidean space), 
 every finite Borel measure on it 
 is tight. 
Hence,  there exists $\calK \subset \subset \calE$ such that 
 \begin{equation}\label{eq:OmegaK}
   \Q(\Omega_{\calK}) \ge 1-\delta, \quad \Omega_{\calK} := \{\omega \in \Omega \ : \ \Xi(X_t(\omega)) \in \calK  \; \ \forall t\in \calT\}.
\end{equation} 
 \underline{\textit{Step ${2}$.}} Fix $\iota>0$ and $f,f'\in \frakF$ such that   $\lVert f - f' \rVert_{L^{\infty}(\calT \times \calK)}\le \iota$. Using \cref{lem:TV}, then 
$$|v(\tau_{f})-v(\tau_{f'})| \le C \Q(\tau_{f} \ne \tau_{f'})^{1/2},$$
so it remains to find the appropriate upper bound for $\Q(\tau_{f} \ne \tau_{f'})$.   
      First, note that the event $  \{\tau_{f} \ne \tau_{f'}\}$ is contained in   
      $$\tilde{\Omega} = \{\omega \in \Omega  : X_t(\omega) \in \frakS_t(f) \triangle \frakS_t(f')   \ \text{ for some } t\in \calT\}.  $$   
Indeed, for $\omega \notin \tilde{\Omega}$,
the sets $\{t \in \calT  :  X_t(\omega) \in  \frakS_t(f) \}$ and $\{t \in \calT  : X_t(\omega) \in  \frakS_t(f') \}$  are identical. Hence $\tau_{f}(\omega) = \tau_{f'}(\omega)$, which shows that $\Omega \setminus \tilde{\Omega} \subseteq \{\tau_f = \tau_{f'}\}$. 
Next, notice that 
\begin{align*}
     x &\in \frakS_t(f) \triangle \frakS_t(f')   \, \Longleftrightarrow \alpha(x) \in I(t,\Xi(x)), \quad x \in \calX, 
\end{align*}
with the interval $I(t,\xi) := [(f \wedge f')(t,\xi), (f \vee f')(t,\xi))$.   
Since $\lVert f - f' \rVert_{L^{\infty}(\calT \times \calK)}\le \iota$, then  
 $I(t,\xi)$ has  length at most  $\iota$ 
 for every $(t,\xi) \in \calT \times \calK$. See \cref{fig:symdiff} for an illustration.   
Using Assumption \ref{asm:absCont} and recalling that 
$\rho_t$ is the modulus of continuity of $a \mapsto \Q(\alpha(X_t) \in [0,a] \ | \  \Xi(X_t))$, then  
\begin{equation}\label{eq:absCont}
   \Q(\alpha(X_t) \in I(t,\Xi(X_t)) )  \le  \rho_t(\iota).
\end{equation}
We conclude from \eqref{eq:absCont} and 
the union bound  that 
\begin{align*}
    \Q( \tilde{\Omega} \cap   \Omega_{\calK})
    \le \sum_{t\in \calT} \Q(\{\alpha(X_t)\in I(t,\Xi(X_t))\} \cap \Omega_{\calK}  )
    \le \sum_{t\in \calT} \rho_t(\iota) 
    &=  \rho(\iota).
\end{align*}
Combining Step 1 with the above observations, we finally  obtain
\begin{equation*}
  \Q(\tau_{f} \ne  \tau_{f'}) \le   \Q(\{\tau_{f} \ne  \tau_{f'}\} \cap \Omega_{\calK})  + \Q(\Omega \setminus \Omega_{\calK}) \le  \Q( \tilde{\Omega} \cap   \Omega_{\calK})  + \delta 
    \le \rho(\iota) + \delta . 
\end{equation*}  
\end{proof} 

We have the following corollary of \cref{thm:unifcontV}, already stated in \eqref{eq:relaxConv}, proving the  convergence of the values obtained by the relaxed stopping rules to the strict ones. 
\begin{corollary}
    \label{cor:relaxUnif}
Under Assumptions \ref{asm:growth} and \ref{asm:absCont}, for all $\varepsilon >0$ and  $f\in \frakF$, 
    \begin{equation}\label{eq:relaxUnif}
   |v(\mu_{f}^{\varepsilon}) - v(\tau_{f})| \le C\sqrt{\rho (\varepsilon)} ,
\end{equation}
    with $\rho, C$ given in \cref{thm:unifcontV,lem:TV}, respectively. 
\end{corollary}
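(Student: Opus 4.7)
The plan is to realize the relaxed rule $\mu_f^{\varepsilon}$ as a genuine (non-relaxed) hitting time on an enlarged probability space, and then to reuse the probability estimate from \cref{thm:unifcontV} in the special case where the boundary perturbation is uniformly $\varepsilon$-small on all of $\calE$. On the product space $(\widetilde{\Omega},\widetilde{\Q}) = (\Omega\times [0,1]^{\calT}, \Q\otimes \mathrm{Unif}^{\otimes \calT})$, I would introduce iid uniform random variables $(U_t)_{t\in\calT}$ independent of $X$, enlarge the filtration to $\widetilde{\calF}_t := \calF_t\vee \sigma(U_s, s\le t)$, and consider the randomly shifted boundary $f^U(t,\xi) := f(t,\xi) - \varepsilon U_t$ together with its hitting time $\tau_{f^U} := \inf\{t\in\calT : \alpha(X_t)\ge f^U(t,\Xi(X_t))\}$, which is a genuine $\widetilde{\F}$-stopping time.

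The core observation is that, conditionally on $X$, the one-step stopping probabilities of $\tau_{f^U}$ match the stopping intensities defining $\mu_f^{\varepsilon}$:
\be
\widetilde{\Q}\bigl(U_t \ge (f(t,\Xi(X_t)) - \alpha(X_t))/\varepsilon \,\big|\, X\bigr) = \chi^{\varepsilon}(d(t,X_t;f)) = p_{f,t}^{\varepsilon}.
\ee
Since the $U_t$'s are mutually independent, this gives $\widetilde{\Q}(\tau_{f^U}=t\mid X) = p_{f,t}^{\varepsilon}\prod_{s<t}(1-p_{f,s}^{\varepsilon}) = P_{f,t}^{\varepsilon}$, and hence $v(\mu_f^{\varepsilon}) = \E^{\widetilde{\Q}}[\varphi(\tau_{f^U},X_{\tau_{f^U}})]$. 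Combining this with $v(\tau_f) = \E^{\widetilde{\Q}}[\varphi(\tau_f,X_{\tau_f})]$ (since $\tau_f$ does not depend on $U$) and applying the Cauchy--Schwarz argument underlying \cref{lem:TV} on $\widetilde{\Q}$ (the constant $C$ is unchanged because $\Phi=\sum_t|\varphi(t,X_t)|$ is $U$-independent) yields $|v(\mu_f^{\varepsilon}) - v(\tau_f)| \le C\,\widetilde{\Q}(\tau_{f^U}\ne \tau_f)^{1/2}$.

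The remaining step bounds $\widetilde{\Q}(\tau_{f^U}\ne \tau_f)$ by mirroring Step~2 of the proof of \cref{thm:unifcontV}. Indeed, $\{\tau_{f^U}\ne \tau_f\}\subseteq \bigcup_t\{X_t\in \frakS_t(f)\triangle \frakS_t(f^U)\}$, and because $f-\varepsilon\le f^U\le f$ pointwise, each such symmetric difference lies in the fuzzy shell $\{\alpha(X_t)\in[f(t,\Xi(X_t))-\varepsilon,f(t,\Xi(X_t)))\}$. The crucial simplification over \cref{thm:unifcontV} is that here the perturbation is $\varepsilon$-small on the entire $\calE$, not just on a compact subset $\calK$, so the tightness reduction and the accompanying $\delta$-loss are unnecessary. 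Invoking \cref{asm:absCont} and the union bound then gives $\widetilde{\Q}(\tau_{f^U}\ne \tau_f) \le \sum_t \rho_t(\varepsilon) = \rho(\varepsilon)$, completing the argument. The only delicate point is the coupling step, namely establishing the identity $\widetilde{\Q}(\tau_{f^U}=\cdot\mid X) = \mu_f^{\varepsilon}$ together with the measurability/adaptedness bookkeeping on $\widetilde{\F}$; once that is in place, the rest is essentially a simplified rerun of the proof of \cref{thm:unifcontV}.
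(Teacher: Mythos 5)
Your argument is correct, but it takes a genuinely different route from the paper. The paper works directly with the total variation distance: it computes $\textnormal{TV}(\mu_f^{\varepsilon},\delta_{\tau_f}) = 1 - P^{\varepsilon}_{f,\tau_f} \le \mathds{1}_{\{\mu_f^{\varepsilon}\ne\delta_{\tau_f}\}}$, observes the inclusion $\{\tau_{f-\varepsilon}=\tau_f\}\subseteq\{\mu_f^{\varepsilon}=\delta_{\tau_f}\}$ (the path never enters the fuzzy shell before $\tau_f$), and thereby reduces everything to the deterministic perturbation $f'=f-\varepsilon$ in Step~2 of \cref{thm:unifcontV}. You instead build a Skorokhod-type realization of the relaxed rule: on the enlarged space the conditional law of $\tau_{f^U}$ given $X$ is exactly $\mu_f^{\varepsilon}$ (your computation of $\widetilde{\Q}(U_t\ge (f-\alpha)/\varepsilon\mid X)=\chi^{\varepsilon}\circ d$ checks out, including the boundary cases $f\le\alpha$ and $f=\infty$), which lets you invoke the stopping-time version of \cref{lem:TV} verbatim rather than estimating the total variation by hand. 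Both routes then converge on the identical final estimate, since $f-\varepsilon\le f^U\le f$ forces $\frakS_t(f)\triangle\frakS_t(f^U)$ into the same shell $[f-\varepsilon,f)$ that the paper uses, and you correctly note (as does the paper) that no compactness localization is needed because the perturbation is uniformly $\varepsilon$-small on all of $\calE$. What your approach buys is a structural insight of independent interest — the relaxed stopping rule $\mu_f^{\varepsilon}$ is the conditional law of a genuine hitting time of a randomly shifted boundary — at the cost of the extra measurability and filtration bookkeeping on the product space, which you rightly flag as the only delicate point and which does go through.
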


\begin{remark}
Under the additional assumption given in \cref{rem:bdedDensity}, equation  \eqref{eq:relaxUnif} becomes, 
    \begin{equation} \label{eq:relaxUnif2}
        |v(\mu_{f}^{\varepsilon}) - v(\tau_{f})| \le \tilde{C}\sqrt{\varepsilon}, 
    \end{equation}
    where $\tilde{C} = C c$ and $c$ defined in \cref{rem:bdedDensity}. We therefore obtain a rate of convergence equal to $1/2$ for the value associated to $\mu_f^{\varepsilon}$ as $\varepsilon \to 0$. 
    \end{remark}

 \begin{proof} (\cref{cor:relaxUnif}) 
  In view of \cref{lem:TV},  $|v(\mu_{f}^{\varepsilon}) - v(\tau_{f})| \le C  \| \textnormal{TV}(\mu_{f}^{\varepsilon},\delta_{\tau_f}) \lVert_{L^2(\Q)}.$ 
  Also by the definition of $\mu_{f}^{\varepsilon}$ in  \eqref{eq:relaxedRule} and the fact that  $\delta_{\tau_f}$ is a (random)  Dirac measure,  the total variation distance simplifies to 
\begin{align}
     \text{TV}(\mu_{f}^{\varepsilon},\delta_{\tau_f}) &= \frac{1}{2} \Big[ 1 - P_{f,\tau_{f}}^{\varepsilon} +  \sum_{t\in \calT\setminus\{\tau_{f}\}} P_{f,t}^{\varepsilon} \Big] = 1 - P_{f,\tau_{f}}^{\varepsilon} \le \mathds{1}_{\{\mu_{f}^{\varepsilon} \ \ne \ \delta_{\tau_{f}}\}} \quad \Q-\text{a.s.} 
\end{align}
     We now claim that  $\{\tau_{(f-\varepsilon)} = \tau_{f}\} \subseteq  \{\mu_{f}^{\varepsilon} = \delta_{\tau_{f}}\}$. Indeed,  if  $\tau_{(f-\varepsilon)}(\omega) = \tau_{f}(\omega)$,  the discrete path $(X_{t}(\omega))_{t\in \calT}$ never enters the fuzzy region  $\frakB_t^{\varepsilon}(f) = \frakS_t(f-\varepsilon) \setminus \frakS_t(f)$ prior to $\tau_f(\omega)$. Thus, the relaxed stopping rule 
     $\mu_f^{\varepsilon}(\omega)$ coincide with $\tau_f(\omega)$. Consequently, 
     $$  
     \lVert \textnormal{TV}(\mu_{f}^{\varepsilon},\delta_{\tau_f}) \lVert_{L^2(\Q)}^2  \le \Q(  \mu_{f}^{\varepsilon} \ \ne \ \delta_{\tau_{f}}) \le   \Q(  \tau_{(f-\varepsilon)} \ne \tau_{f}  ). 
     $$
  We now follow Step 2 in the proof of \cref{thm:unifcontV}  with  $f' = f-\varepsilon$, $\iota =\varepsilon$ to conclude that  $\Q(  \tau_{f-\varepsilon} \ne \tau_{f}  ) \le \rho(\varepsilon)$. We note that as $\lVert f-f'\rVert_{L^{\infty}(\calT\times \calE)} = \varepsilon$, no localization is needed here as done in the proof of \cref{thm:unifcontV}.  
 \end{proof}

\subsection{Relaxed $L^{\infty}$ metric and second continuity result} 
\label{sec:relaxLinfty}

The continuity result in \cref{thm:unifcontV} only applies when boundaries are close in the $L^{\infty}$ sense. 
Although reasonable for continuous boundaries, this condition is typically too restrictive 
for semicontinuous boundaries. We therefore introduce a weaker topology {\rr{induced by the following metric.
\begin{definition}
\label{def:relaxedSupDist} 
Given $f,g: \calE\to \R$, the \textit{relaxed $L^{\infty}$ metric}  is defined as 
$\frakm(f,g):= \frakm_d(f,g)+\frakm_d(g,f)$, 
where 
$$
\frakm_d(f,g): =  \inf_{\psi\in \Psi}  
\sup_{\xi \in \calE} \ \big[(f(\psi(\xi))- g(\xi))^+ +  |\psi(\xi)-\xi| \big],
$$
and $\Psi$ is the set of all endofunctions $\psi:\calE \to \calE$. 
For  $f,g \in \frakF$, we also write  
\begin{equation}\label{eq:relaxDistF}
\frakm(f,g)  = \max_{t \in \calT}  \frakm( f(t,\cdot), g(t,\cdot)). 
\end{equation}
We denote the relaxed $L^\infty$ distance on any compact subset
$\calK$ by $\frakm_{\calK}(f,g) := \frakm(f\mathds{1}_{\calK} , g\mathds{1}_{\calK})$.
\end{definition}
We note that the epigraph, $\text{epi}(f)$, of a lower semi-continuous function $f$
is closed and for any $\xi \in \calE$,  $\inf_{\psi\in \Psi}  \big[(f(\psi(\xi))- g(\xi))^+ +  |\psi(\xi)-\xi| \big]$
is equal to the distance from $(\xi,g(\xi))$  to $\text{epi}(f)$.}}

{\rr{The relaxed $L^{\infty}$ metric comes as a natural notion for  semicontinuous functions 
and is closely related to \textit{inf convolutions} discussed in \cref{sec:convLSC}. 
The transformations  in $\Psi$ echo the  reparametrization maps appearing in the  
Skorokhod $J1$ distance for càdlàg processes. 
A notable difference is that unlike 
the Skorokhod metric,  $\psi$ needs not be invertible. Additionally,
the \emph{``directed"} function 
$\frakm_d$ is an asymmetric metric studied in \cite{mennucci}
as we prove below.}}
{\rr{ \begin{lemma}
 \label{lem:metric}
 The function $\frakm_d$ is an 
 asymmetric metric, i.e., it has the following properties:
 \begin{itemize}
\item $\frakm_d(f,g)=0$ if and only if $f\le g$ on $\calE$.
\item (triangle inequality) 
$\frakm_d(f,h) \le \frakm_d(f,g) +\frakm_d(g,h)$, for any $f,g,h \in \frakF$.
\end{itemize}
Consequently, $\frakm$ is a metric on $\frakF$ and also  dominated by the supremum distance.
\end{lemma}
\begin{proof}
First statement follows directly from the definition.
Fix $f,g,h \in \frakF$.  For $\psi,\phi \in \Psi$, set $\varphi := \phi \circ \psi$.
Then, $\varphi \in \Psi$ and we have
\begin{align*}
\frakm_d(f,h) 
&\le   \sup_{\xi \in \calE} \ \big[(f(\varphi(\xi))-h(\xi))^+ +  |\varphi(\xi)-\xi| \big]\\
&\le \sup_{\xi \in \calE} \ \big[(f(\phi(\psi(\xi))- g(\psi(\xi)))^+  +(g(\psi(\xi)) -h(\xi) )^+
+  |\psi(\xi)-\xi| + |\phi(\psi(\xi))-\psi(\xi) | \big]\\
&\le \sup_{\xi \in \calE} \ \big[(f(\phi(\psi(\xi))- g(\psi(\xi)))^+ + |\phi(\psi(\xi))-\psi(\xi) |   \big] +
\sup_{\xi \in \calE} \big[(g(\psi(\xi)) -h(\xi) )^+ +  |\psi(\xi)-\xi|\big] \\
&\le \sup_{\xi' \in \calE} \ \big[(f(\phi(\xi'))- g(\xi'))^+  +  |\phi(\xi') -\xi'|  \big]  +
\sup_{\xi \in \calE} \big[ (g(\psi(\xi)) - h(\xi))^+  + |\psi(\xi)-\xi|\big] .
\end{align*}
Since above holds for every  $\psi,\phi \in \Psi$, 
we conclude that
\begin{align*}
\frakm_d(f,h) 
&\le \inf_{\phi \in \Psi}  \ \sup_{\xi' \in \calE} \ \big[(f(\phi(\xi'))- g(\xi'))^+  +  |\phi(\xi') -\xi'|  \big] +
\inf_{\psi \in \Psi} \sup_{\xi \in \calE} \ \big[ (g(\psi(\xi)) - h(\xi))^+  + |\psi(\xi)-\xi|\big] \\
& = \frakm_d(f,g) +\frakm_d(g,h).
\end{align*}
Then, the symmetrized function $\frakm$ is a metric.
\end{proof}}}

Next, we adapt \cref{thm:unifcontV} to obtain a continuity type result of 
$f\mapsto v(\tau_f)$ with respect to  the relaxed $L^{\infty}$ metric. 

\begin{theorem}\label{thm:lscV}
Let $C \in (0,\infty)$ be as in \cref{lem:TV} and fix $\delta > 0$.
Then, under Assumptions \ref{asm:absCont},  
there exists 
$\calK \subset \subset \calE$  such that for all
$f \in \frakF$ and a {\rr{sequence $f_n \le f$ in  $\frakF$}}, we have 
$$
\lim_{n\to \infty }\frakm_{\calK}(f,f_n) = 0 \; \; \Longrightarrow \; \; 
\limsup_{n\to \infty }\ |v(\tau_{f}) - v(\tau_{f_n})|\le C\sqrt{\delta}. 
$$
\end{theorem}

\begin{proof}  
{\rr{As  in the proof of  \cref{thm:unifcontV}, there exists   $\calK \subset \subset \calE$  such that 
 \begin{equation*}
\Q(\Omega_{\calK}) \ge 1-\delta, \quad \Omega_{\calK} 
= \{\omega \in \Omega \ : \ \Xi(X_t(\omega)) \in \calK  \; \ \forall t\in \calT\}.
\end{equation*}
Consider a sequence  $f_n \le f$ in  $\frakF$, such that 
$\frakm_{\calK}(f,f_n)$ converges to zero.  Then, there are $\psi_n \in \Psi$ satisfying,
$$
c_n:= \sup_{\xi \in \calK} \ \big[( f(\psi_n(\xi))-f_n(\xi))^++  |\psi_n (\xi)-\xi| \big] \longrightarrow 0, \quad n \to \infty.
$$
Since by \eqref{eq:frakF} $f$ is lower semi-continuous and $c_n$ tends to zero, for every $\xi \in \calK$,
\begin{align}
\nonumber
0 &\le \limsup_{n \to \infty} f(\xi)- f_n(\xi) \le
\limsup_{n \to \infty} \ (f(\xi)- f(\psi_n(\xi))) + \limsup_{n \to \infty} \ (f(\psi_n(\xi)) - f_n(\xi))\\
\label{eqn:converge}
&\le  f(\xi)-\liminf_{n \to \infty} f(\psi_n(\xi)) + \limsup_{n \to \infty} \ (f(\psi_n(\xi)) - f_n(\xi))^+
\le 0.
\end{align}
Following the proof of \cref{thm:unifcontV}, we obtain,  
\begin{align*}
\{\tau_{f} \ne \tau_{f_n}\} \subseteq  \Omega_n : = 
\bigcup_{t\in \calT}\big\{ X_t\in \frakS_t(f) \triangle \frakS_t(f_n) \big\} = 
\bigcup_{t\in \calT}\big\{ f_n(t,\Xi(X_t)) \le \alpha(X_t) <  f(t,\Xi(X_t))  \big\}.
\end{align*}
Set $\chi_n$ to be the characteristic function of the set
$\Omega_n \cap \Omega_{\calK}$.  
In view of \eqref{eqn:converge}, $\chi_n$ converges to zero.}}
{\rr{Then, from the bounded convergence theorem, we have 
$
\limsup_{n\to \infty }\ \Q(\Omega_n \cap \Omega_{\calK})  = 0.
$ 
Following   the same rationale as in the proof of  \cref{thm:unifcontV}, we conclude  that 
 \begin{align*}
    \limsup_{n\to \infty }\ |v(\tau_{f})-v(\tau_{f_n})| &\le  \limsup_{n\to \infty }\  C\left[\Q(\Omega_n \cap \Omega_{\calK}) + \delta\right]^{1/2} = C\sqrt{\delta}. 
\end{align*}}}
\end{proof}
\section{Application to neural  stopping boundaries} \label{sec:NOSB}

\rr{In this section, we apply the continuity results to prove the convergence 
of the \textit{Neural Optimal Stopping Boundary method}   developed in
\cite{ReppenSonerTissotFB}.  We outline the algorithm for the convenience of the reader.} Under a suitable coordinate system  $(\alpha,\Xi)$ as in Assumption \ref{asm:star}, the algorithm looks for a \textit{neural stopping boundary}, namely a feedforward neural network $g^{\theta}: \calT \times \calE \to \R_+ $  in a (sufficiently large) hypothesis space $\frakN$ 
such that $$v(\tau_{g^{\theta}}) \approx \sup_{f\in \frakF}
v(\tau_f) \  = v^{\diamond}.$$ 
Additional details on the  architecture and  size of the neural networks 
for this problem are given in \cite{ReppenSonerTissotFB}.
The parameter vector $\theta$ is thereafter trained using stochastic gradient ascent. As explained at the beginning of \cref{sec:relaxation}, the  stopping rule needs to be relaxed to avoid vanishing gradient issues. 
For some fixed fuzzy region width $\varepsilon>0$, we therefore 
aim to 
\begin{equation}
\label{eq:OSNNRelax}
\text{maximize}
\ \ 
v(\mu_g^{\varepsilon}) =  \E^{\Q}\left[ \int_{\calT}\varphi(t,X_{t})\mu_g^{\varepsilon}(dt)\right]
\qquad
\text{over}\  g\in \frakN,
\end{equation} 
with the relaxed stopping rule $\mu_g^{\varepsilon}$ given in \eqref{eq:relaxedRule}.  
The vanishing gradient problem is henceforth resolved and owing to well-known convergence results in stochastic optimization \cite{RobbinsMonro,RobbinsSiegmund}, we can find for every $\gamma > 0$ 
an 
$\gamma-$\textit{maximizer} 
$g^{\varepsilon,\gamma} \in \frakN$, that is 
\begin{equation}\label{eq:fuzzytraining}
   v(\mu^{\varepsilon}_{g^{\varepsilon,\gamma}})\ge \sup_{g \in \frakN} \  v(\mu^{\varepsilon}_g) - \gamma.  
\end{equation} 
Finally, the value of the optimal stopping problem is computed using  the strict stopping rule $\tau_{g^{\varepsilon,\gamma}}$ instead of  $\mu^{\varepsilon}_{g^{\varepsilon,\gamma}}$. 
The method is summarized in  \cref{alg:NOSB}. 
For further details, see Appendix C in \cite{ReppenSonerTissotFB}.  The goal of this section is to show that $
   v(\tau_{g^{\varepsilon,\gamma}})$ can be made arbitrarily close to the optimal value $ v^{\diamond} 
$ when   $\varepsilon, \gamma$ are small enough. 

\begin{remark}\label{rem:MC}
When computing the value associated to the trained stopping rule, say $\tau_{g}$,  an additional inaccuracy comes  from the Monte Carlo simulations of the process; see step III in \cref{alg:NOSB}. There is nevertheless a clear  control of this error.  Indeed, it is classical that 
$$|v(\tau_{g}) - \hat{v}_J(\tau_{g})| = \calO(J^{-1/2}),$$
 when the random reward $\varphi(\tau_{g},X_{\tau_{g}})$ is  square integrable. Under Assumption \ref{asm:absCont}, this is indeed the case (see also the proof of \cref{lem:TV}). 
\end{remark}

\begin{remark}
In \cite{ReppenSonerTissotFB},  a two-sided fuzzy region is considered, namely 
  $ \{ x\in \calX \ : \  |d(t,x;f)| < \varepsilon \},$ 
where $d(t,\cdot;f)$ is the  \textit{signed distance function} to  $\partial \frakS_t(f)$. 
Although the two representations are equivalent, one-sided fuzzy regions are  more practical towards a convergence analysis.
\end{remark}

\begin{algorithm}[t]
\caption{Neural Optimal Stopping Boundary Algorithm (summary) }\label{alg:NOSB}

\vspace{1mm}
\textbf{Given}: $\varphi=$ payoff, $ \calT =$ exercise dates, $\Theta=$ parameter set, $I =  \#$ training iterations,  $\zeta=$ learning rate process,  $\varepsilon=$ fuzzy region width, $B = $ batch size, $J= \#$ simulations for the value  \\[-0.8em]  \hrule

\begin{itemize}
 \setlength \itemsep{-0.1em}
\item[I.] \textbf{Initialize} $\theta_0 \in \Theta$
\item[II.] For $i = 0,\ldots,I-1$:
    \begin{enumerate}
    \setlength \itemsep{0.5em}
        \item \textbf{Simulate} $X(\omega_b) = \{X_t(\omega_b)\}_{t\in \calT}$, $\omega_b \in \Omega $, $b=1,\ldots, B$
        \item \textbf{Compute}  $ \hat{v}_B(\mu^{\varepsilon}_{g^{\theta_i}}) :=\frac{1}{B}\sum\limits_{b=1}^B \int_{\calT}  \varphi(t,X_{t}(\omega_b)) \ \mu^{\varepsilon}_{g^{\theta_i}}(\omega_b)(dt) $ 
\item \textbf{Gradient step}: $\theta_{i+1} = \theta_{i} + \zeta_i \nabla_\theta \hat{v}_B(\mu^{\varepsilon}_{g^{\theta_i}}) $
   \end{enumerate}
\item[III.] \textbf{Return} $g = g^{\theta_{I}}$ and $\hat{v}_J(\tau_{g}) := \frac{1}{J}\sum\limits_{j=1}^J \varphi(\tau_{g}(\omega_j),X_{\tau_{g}}(\omega_j)), \  \omega_j \in \Omega $, $\ j=1,\ldots, J$
\end{itemize}
\vspace{-3mm}
\end{algorithm}

\subsection{Convergence analysis: continuous boundaries}

We first outline our strategy to prove convergence of the Neural Optimal Stopping Boundary method when the optimal stopping boundary $f^{\diamond}$ is continuous. First, the hypothesis space $\frakN$ (neural networks in this case) must satisfy a universal approximation property; see Assumption \ref{asm:UAP} below. Thus, there exists $g^{\diamond} \in \frakN$ arbitrarily close to $f^{\diamond}$ in compact sets. We can  therefore invoke our first continuity result  (\cref{thm:unifcontV}) to ensure that the values of the stopping strategies $\tau_{f^{\diamond}}$, $\tau_{g^{\diamond}}$ are close as well. Next, we relax the problem and use \cref{cor:relaxUnif}  to guarantee that $v(\mu^{\varepsilon}_{g^{\diamond}}) \approx  v(\tau_{g^{\diamond}})$ for small fuzzy parameter $\varepsilon$. We then train the neural boundary  
to construct $g^{\varepsilon,\gamma} \in \frakN$ such that $v(\mu^{\varepsilon}_{g^{\diamond}}) \approx v(\mu^{\varepsilon}_{g^{\varepsilon,\gamma}})$ and finally  conclude that $v(\mu_{g^{\varepsilon,\gamma}}^{\varepsilon}) \approx  v(\tau_{g^{\varepsilon,\gamma}})$  in view of   \cref{cor:relaxUnif}.  The above steps can be summarized as follows: 
\begin{align*}
    v^{\diamond} &= v(\tau_{f^{\diamond}}) && \\  
&\approx v(\tau_{g^{\diamond}}) && \longleftarrow \ \textnormal{approximation and continuity (Assumption \ref{asm:UAP} and \cref{thm:unifcontV})}\\ 
&\approx  v(\mu_{g^{\diamond}}^{\varepsilon}) && \longleftarrow \ \textnormal{relaxation (\cref{cor:relaxUnif})} \\ 
&\approx  v(\mu_{g^{\varepsilon,\gamma}}^{\varepsilon}) && \longleftarrow \ \textnormal{stochastic optimization  \cite{RobbinsMonro,RobbinsSiegmund}} \\ 
&\approx v(\tau_{g^{\varepsilon,\gamma}}). && \longleftarrow \ \textnormal{relaxation (\cref{cor:relaxUnif})}
\end{align*}
We start with the approximation of $f^{\diamond}$. Let $\frakF_c  = \{f \in \frakF :  f(t,\cdot) \in \calC(\calE) \; \forall t\in \calT\}$ denotes the set of continuous boundaries and consider  a  hypothesis space $\frakN \subset \frakF_c$ satisfying the following  \textit{universal approximation property}. 
\begin{asm}\label{asm:UAP}
For all 
$f\in \frakF_c$, compact set  $\calK \subset \subset \calE$  and $\iota >0$, there exists  $g\in \frakN$ such that  
\begin{equation}\label{eq:UAP}
    \lVert f - g \rVert_{L^{\infty}(\calT \times \calK )} \le \iota. 
\end{equation}
\end{asm}

We refer to elements in a family $\frakN \subset \frakF_c$ such that 
Assumption \ref{asm:UAP} holds as \textit{universal approximators}. The Stone-Weierstrass theorem gives an example of  universal approximators, namely the polynomials. This class, however, is  numerically unsuitable when the Hausdorff dimension of $\calE$ is large. In our case, $\frakN$ is evidently the family of all feedforward neural networks $g^\theta:\calT\times \  \calE \to \R_+$ with finite-dimensional parameter vector $\theta$. 
Assumption \ref{asm:UAP} follows classically from the 
 Universal Approximation Theorem (UAT) for neural networks \cite{Cybenko,Hornik}. 

In this section, we assume that $f^{\diamond}(t,\xi)$ is  continuous in $\xi$, i.e. $f^{\diamond} \in \frakF_c$. We can therefore directly use $f^{\diamond}$ in Assumption \ref{asm:UAP}. Together with the results from \cref{sec:continuity}, the next convergence theorem follows naturally. 
\begin{theorem}\label{thm:convCont}
   Consider $\frakN \subset \frakF_c$  satisfying Assumption \ref{asm:UAP} 
   and suppose that Assumption  \ref{asm:absCont} holds.  
   Further assume that the 
   optimal stopping boundary is continuous, i.e., $f^{\diamond}\in \frakF_c$.
   Then, for every $\delta >0$,   there exists $\varepsilon, \gamma>0$  such that any $g^{\varepsilon,\gamma} \in \frakN$ with the property 
   $v(\mu^{\varepsilon}_{g^{\varepsilon,\gamma}}) \ge \sup_{g \in \frakN} \  v(\mu^{\varepsilon}_g) - \gamma$ 
   also satisfies,
   \begin{equation*}
    v(\tau_{g^{\varepsilon,\gamma}})\ge v^{\diamond} - \delta 
    =v(\tau_{f^\diamond}) -\delta
    = \sup_{f \in \frakF} v(\tau_f) -\delta.  
\end{equation*} 
\end{theorem}

\begin{proof} \underline{\textit{Step ${1}$.}}  Let $\delta'>0 $ be arbitrary. From  \cref{thm:unifcontV}, there exist $\delta''>0$, $\calK\subseteq \calE$ compact,  and $\iota>0$ such that
\begin{equation}
    \lVert f - f' \rVert_{L^{\infty}(\calT \times \calK)}\le \iota \ \Longrightarrow \ |v(\tau_{f}) - v(\tau_{f'})|\le C\sqrt{\rho(\iota) + \delta''}\le \delta'. 
\end{equation}
Invoking Assumption \ref{asm:UAP}, there exists   $g^{\diamond} \in \frakN$ such that 
$\lVert f^{\diamond} - g^{\diamond} \rVert_{L^{\infty}(\calT \times \calK)}\le \iota. $
We conclude  from \cref{thm:unifcontV}  that $v(\tau_{g^{\diamond}}) \ge v^{\diamond} - \delta'$. \\[-1em]

   \underline{\textit{Step ${2}$.}} 
   For $\varepsilon >0$, set
   $v^{\varepsilon} :=\sup_{g \in \frakN} v(\mu^{\varepsilon}_g)$.  
   For any  $f \in \frakF$, \cref{cor:relaxUnif} implies that 
    $v(\mu^{\varepsilon}_f) \to v(\tau_f)$  as $\varepsilon \to 0$. 
    In particular, taking $f=g^{\diamond}$ from Step 1 yields 
    $$\liminf_{\varepsilon \to 0 } v^{\varepsilon} \ge  \liminf_{\varepsilon \to 0 } v(\mu_{g^{\diamond}}^{\varepsilon}) = v(\tau_{g^{\diamond}}) . $$
    Thus, one can pick $ \bar{\varepsilon}  >0$ such that $v^{\varepsilon} \ge v(\tau_{g^{\diamond}})  - \delta'$ for all $\varepsilon \in (0,\bar{\varepsilon})$. \\[-1em]
    
     \underline{\textit{Step ${3}$.}}  Using again \cref{cor:relaxUnif}, there exists  $ \ \varepsilon \in (0,\bar{\varepsilon})$ such that for all $f \in \frakF$,    \begin{equation}\label{eq:step3}
      |v(\tau_{f}) - v(\mu^{\varepsilon}_{f})| \le C\sqrt{\rho(\varepsilon)} \le  \delta'.
     \end{equation} 
     Consequently, if  $g^{\varepsilon,\gamma} \in \frakN$ is a $\gamma-$maximizer of $v^{\varepsilon}$, then 
     $$v(\tau_{g^{\varepsilon,\gamma}}) \ge v(\mu^{\varepsilon}_{g^{\varepsilon,\gamma}}) - \delta' \ge v^{\varepsilon} - \gamma-\delta'. $$
   \underline{\textit{Step ${4}$.}}
Collecting the above inequalities, we obtain
\begin{align*}
    v(\tau_{g^{\varepsilon,\gamma}})  
    \ge v^{\varepsilon} - \gamma-\delta'  
    \ge  v(\tau_{g^{\diamond}}) - \gamma - 2 \delta'
    \ge v^{\diamond}- \gamma - 3\delta'.
\end{align*}
We can therefore set $\delta' = \gamma = \frac{\delta}{4}$.  
\end{proof}

\subsection{Convergence analysis: semicontinuous boundaries}\label{sec:convLSC}

This section consider optimal boundaries $f^{\diamond} = f^{\diamond}(t,\xi)$ that are 
only not necessarily continuous and are only
lower continuous in $\xi$. To project $f^{\diamond}$ onto $\frakN$, we  first carefully construct a sequence of \textit{continuous} boundaries converging to $f^{\diamond}$. These continuous boundaries can thereafter be approximated by elements in $\frakN$. To construct such sequence,  we utilize the notion of \textit{inf/sup convolution},  which is central in the theory of viscosity solutions; see \cite[Section V.5]{FS} and \cite{LasryLions}.   

Let $h:\calE \to \R_+$ be a  lower semicontinuous function and define its \textit{inf convolution}  as
\begin{equation}\label{eq:infconvDef}
      h_\delta(\xi) := \inf_{\xi' \in  \calE}\left[h(\xi') + \frac{1}{\delta} |\xi'-\xi|^2 \right] , \quad \delta >0. 
\end{equation}
    It is easily seen that $h_\delta$ is semiconcave, hence continuous.    
For $f \in \frakF$, the inf convolution of $f$ is understood as $f_{\delta}(t,\cdot) = f(t,\cdot)_{\delta}$   for each $t\in \calT$.  The next proposition shows that $f_{\delta} \uparrow f$ as $\delta \to 0$; see  \cref{fig:infconv}. In particular, the hitting time $\tau_{f_{\delta}}$ of   $\frakS(f_{\delta})$ increases to $\tau_{f}$.    

\begin{figure}[t]
    \centering
    \caption{Inf convolution of $f(t,\cdot)$ for some $t\in \calT$. The monotonicity of $\delta \to f_{\delta}$ implies that the hitting time $\tau_{f_{\delta}}$ 
    increases  to $\tau_{f}$ (pathwise) as $\delta \downarrow 0$.  }
    \includegraphics[width = 2.9in,height = 2.0in]{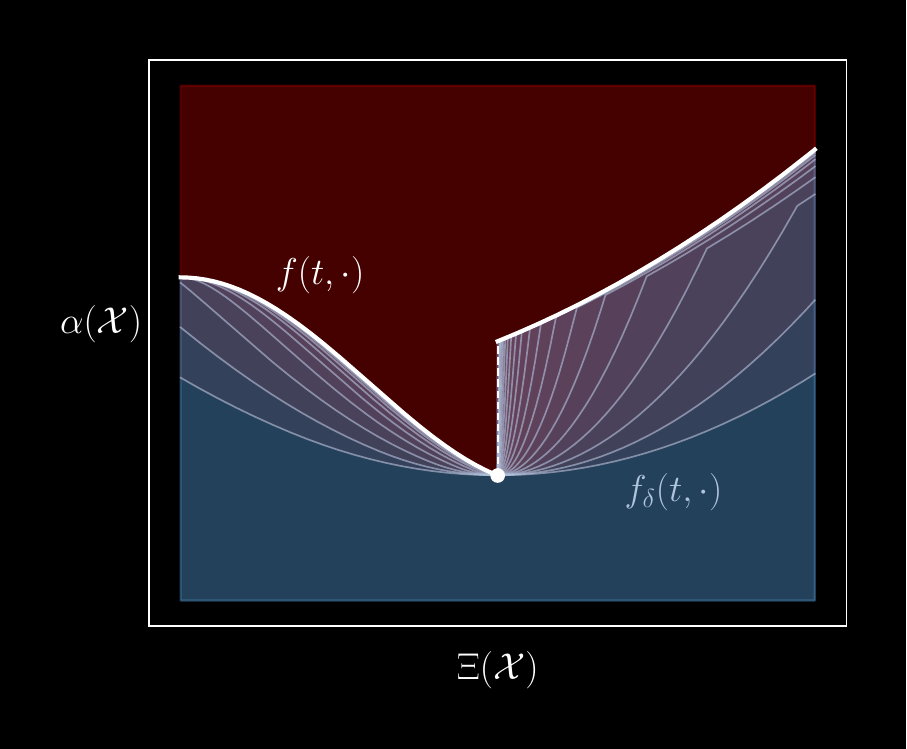}
    \label{fig:infconv}
\end{figure}

\begin{proposition}\label{prop:lsc}
    Let $h: \calE \to \R_+$ be lower semicontinuous and $h_\delta$ be its inf convolution with parameter $\delta$ as in \eqref{eq:infconvDef}. 
    Then  $h_\delta \uparrow h$ pointwise. 
\end{proposition}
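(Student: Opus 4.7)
The plan is to verify three ingredients that together deliver monotone pointwise convergence $h_\delta \uparrow h$: (a) the upper bound $h_\delta(\xi) \le h(\xi)$ for every $\delta > 0$, (b) monotonicity of $\delta \mapsto h_\delta(\xi)$, and (c) the liminf inequality $\liminf_{\delta \downarrow 0} h_\delta(\xi) \ge h(\xi)$.

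Items (a) and (b) should be essentially immediate from \eqref{eq:infconvDef}. For (a), plugging $\xi' = \xi$ into the infimum yields $h_\delta(\xi) \le h(\xi)$. For (b), if $0 < \delta_1 \le \delta_2$ then $\tfrac{1}{\delta_1}|\xi'-\xi|^2 \ge \tfrac{1}{\delta_2}|\xi'-\xi|^2$ for every $\xi' \in \calE$, so the infimand dominates at $\delta_1$ and $h_{\delta_1}(\xi) \ge h_{\delta_2}(\xi)$. Consequently the limit $L(\xi) := \lim_{\delta \downarrow 0} h_\delta(\xi)$ exists and satisfies $L(\xi) \le h(\xi)$.

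The substantive point is (c). For each $\delta > 0$, I would select an approximate minimizer $\xi_\delta \in \calE$ with
\[
h(\xi_\delta) + \tfrac{1}{\delta}|\xi_\delta - \xi|^2 \ \le \ h_\delta(\xi) + \delta \ \le \ h(\xi) + \delta.
\]
Using a lower bound on $h$ (free in this paper, since the proposition is applied to the nonnegative boundary $f^{\diamond}$), the penalty obeys $\tfrac{1}{\delta}|\xi_\delta - \xi|^2 \le h(\xi) - \inf_{\calE} h + \delta$, so $|\xi_\delta - \xi|^2 = O(\delta)$ and $\xi_\delta \to \xi$. Lower semicontinuity of $h$ at $\xi$ then gives
\[
h(\xi) \ \le \ \liminf_{\delta \downarrow 0} h(\xi_\delta) \ \le \ \liminf_{\delta \downarrow 0}\left[ h(\xi_\delta) + \tfrac{1}{\delta}|\xi_\delta - \xi|^2 \right] \ \le \ L(\xi),
\]
closing the loop and establishing (c). Combining (a), (b), (c) produces the desired pointwise monotone convergence.

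The main technical subtlety is the convergence $\xi_\delta \to \xi$, which requires some control on how negative $h$ is allowed to be. In the paper's intended use this is automatic since $f^{\diamond} \ge 0$; for the fully general real-valued lower semicontinuous case, one localizes the infimum to a closed ball on which $h$ is bounded below (available from lower semicontinuity near $\xi$) and checks that the localized and global inf convolutions agree for sufficiently small $\delta$, since distant competitors incur a prohibitive $|\xi'-\xi|^2/\delta$ penalty.
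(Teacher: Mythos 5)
Your proof is correct and follows essentially the same route as the paper: monotonicity in $\delta$ from the penalty term, approximate minimizers $\xi_\delta$ shown to converge to $\xi$, and lower semicontinuity to close the chain of inequalities. The only difference is that you make explicit the lower bound on $h$ needed to conclude $\xi_\delta \to \xi$, a point the paper's proof passes over by silently using $h(\xi_l) \ge 0$ (harmless in its application, where boundaries take values in $[0,\infty]$).
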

\begin{proof}
It is clear that $h_\delta$ increases with $\delta$, hence any sequence $(h_{\delta_n})$ with $ \delta_{l} \downarrow 0$  is non-decreasing. 
Let $\xi_n$ be a $\delta_n-$minimizer of $h_{\delta_n}(\xi)$, i.e. $h(\xi_n) + \frac{1}{\delta_n} |\xi - \xi_n|^2 \le h_{\delta_n}(\xi)+ \delta_n$. Then, 
    $$h(\xi) \ge h_{\delta_n}(\xi) \ge h(\xi_n) + \frac{1}{\delta_n} |\xi - \xi_n|^2 -\delta_n\ge \frac{1}{\delta_n} |\xi - \xi_n|^2 -\delta_n,$$
    which implies that $|\xi - \xi_n|^2 \le \delta_n (\delta_n + h(\xi))$. Therefore $\xi_n \to \xi$, and  the lower semicontinuity of $h$ gives
    $$h(\xi) \le \liminf_{l\to \infty} h(\xi_n)\le \liminf_{l\to \infty} \left[ h(\xi_n) + \frac{1}{\delta_n} |\xi - \xi_n|^2 \right]\le \liminf_{l\to \infty} \left[ h_{\delta_n}(\xi) + \delta_n \right]\le h(\xi). $$
\end{proof}
More can be said when restricting l.s.c. boundaries to a compact set. Accordingly, we define
\begin{equation}\label{eq:infconvCpctDef}
      h_{\delta}^{\calK} (\xi) := \inf_{\xi' \in \calK}\left[h(\xi') + \frac{1}{\delta} |\xi'-\xi|^2 \right], \quad \delta >0, \quad \xi \in \calK \subset \subset \calE. 
\end{equation}
The inf convolution of $f \in \frakF$ in $\calK$ is understood as $f_{\delta}^{\calK}(t,\cdot) = f(t,\cdot)_{\delta}^{\calK}$   for each $t\in \calT$.  
Since $\xi' \mapsto h(\xi') + \frac{1}{\delta} |\xi'-\xi|^2$ is also l.s.c., it attains its minimum on  $\calK$.   
This leads us to the following corollary. Throughout, we write $B_{r}(\xi)$   for the  closed ball  of radius $r\ge 0$ centered at $\xi \in \calE$.   
\begin{corollary}\label{cor:lsc}
    Let  $\calK \subset \subset \calE$ and a lower semicontinuous function $h:\calE \to \R_+$  such that $ c_{\calK} := \lVert h  \rVert_{L^{\infty}(\calK)} < \infty$. Then $\forall \ \delta>0$, $\xi \in \calK$, any minimizer achieving $h_{\delta}^{\calK}(\xi)$ in \eqref{eq:infconvCpctDef} is contained in $B_{\sqrt{\delta c_{\calK}}}(\xi)$. 
\end{corollary}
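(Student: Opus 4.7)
The plan is to exploit the minimizer property directly against the trivial candidate $\xi' = \xi$, which belongs to $\calK$ by assumption. The existence of a minimizer is already granted by the lower semicontinuity of $\xi' \mapsto h(\xi') + \frac{1}{\delta}|\xi' - \xi|^2$ on the compact set $\calK$, so I only need to control how far such a minimizer can sit from $\xi$.

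Concretely, I would let $\xi^{*} \in \calK$ be any minimizer realizing $h_{\delta}^{\calK}(\xi)$, and then compare the value at $\xi^{*}$ with the value at $\xi$ itself. This gives the inequality
\begin{equation*}
h(\xi^{*}) + \frac{1}{\delta}|\xi^{*} - \xi|^{2} \le h(\xi),
\end{equation*}
which rearranges to $|\xi^{*} - \xi|^{2} \le \delta\bigl(h(\xi) - h(\xi^{*})\bigr)$. Since stopping boundaries take values in $[0,\infty]$, I will use $h \ge 0$ on $\calK$ together with $h(\xi) \le M$ to bound the right-hand side by $\delta M$, yielding $|\xi^{*} - \xi| \le \sqrt{\delta M}$, i.e.\ $\xi^{*} \in B_{\sqrt{\delta M}}(\xi)$. (If one only assumes $\|h\|_{L^{\infty}(\calK)} \le M$ without non-negativity, the same argument with $h(\xi^{*}) \ge -M$ gives the slightly weaker bound $\sqrt{2\delta M}$, which is of the same qualitative order.)

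There is essentially no hard step: the result is a direct consequence of the variational definition of $h_{\delta}^{\calK}$ and does not require any compactness argument beyond guaranteeing that a minimizer exists. The only subtlety is making sure $\xi$ itself is an admissible competitor in the infimum, which is precisely why the definition \eqref{eq:infconvCpctDef} uses $\calK$ both as the domain of $\xi$ and as the set over which the infimum is taken. This corollary will be used later to justify that the inf convolutions $f_{\delta}^{\calK}(t, \cdot)$ of a stopping boundary $f(t, \cdot)$ only probe values of $f$ on a shrinking neighborhood of each point, which is what makes them continuous approximations that can be fed into a universal approximation argument via $\frakN$.
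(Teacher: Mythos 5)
Your proposal is correct and is essentially identical to the paper's proof: both compare the minimizer against the competitor $\xi' = \xi$ and bound $\frac{1}{\delta}|\xi^{*}-\xi|^{2}$ by $h(\xi) \le M$ after dropping $h(\xi^{*}) \ge 0$. Your parenthetical about needing non-negativity (or else getting $\sqrt{2\delta M}$) is a fair observation — the paper's own proof also silently uses $h \ge 0$, which holds since boundaries are $[0,\infty]$-valued.
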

\begin{proof}
Let $\xi_{\delta} \in \calK$ be any minimizer in \eqref{eq:infconvCpctDef}, i.e. $h_{\delta}^{\calK}(\xi)  = h(\xi_{\delta}) + \frac{1}{\delta} |\xi_{\delta}-\xi|^2$. Then
    $$c_{\calK}\ge h(\xi) \ge h_{\delta}^{\calK}(\xi)  =  h(\xi_{\delta}) + \frac{1}{\delta} |\xi_{\delta}-\xi|^2 \ge \frac{1}{\delta} |\xi_{\delta}-\xi|^2. $$\end{proof}

\begin{remark}
    When the stopping region is the hypograph of the optimal boundary ($\eta =-1$ in \ref{asm:star}) one can employ \textit{sup convolutions} instead, that is
        $$h^{\delta}(\xi) := \sup_{\xi' \in \ \calE}\left[h(\xi') - \frac{1}{\delta} |\xi'-\xi|^2 \right] \ge h(\xi), \quad \delta >0. $$
        We may also define the restriction $h^{\delta}_{\calK}$ to some compact set  $\calK\subset \subset \calE$. This case is simpler since u.s.c functions are automatically bounded above in compact sets. 
\end{remark}

The next lemma states that inf convolutions approximate lower semicontinuous functions arbitrarily well with respect to the relaxed $L^\infty$ distance seen in \cref{def:relaxedSupDist}.  
{\rr{\begin{lemma}\label{lem:infCovApprox}
Let $\calK$ be a compact subset of $\calE$, 
$h:\calE \to \R$  be bounded and lower semi-continuous,
and $h_{\delta}^{\calK}$ be as in \eqref{eq:infconvCpctDef}. 
Then, for any $\iota >0$, there exists $\delta_{\iota, \calK}>0$ such that 
$$
\frakm_{\calK} (h,h_{\delta}^{\calK}) \le \iota,
\qquad \forall\ \delta \in (0, \delta_{\iota, \calK}].
$$
\end{lemma}
\begin{proof}
Fix $\iota > 0$ and $\calK\subset \subset \calE$. 
Set $c_{\calK}:=  \lVert h  \rVert_{L^{\infty}(\calK)}$.
For $\xi \in \calK$, choose $\psi_\delta(\xi)$ satisfying,
$$
h_\delta^{\calK}(\xi)= h(\psi_\delta(\xi)) + \frac{1}{\delta} |\psi_\delta(\xi)- \xi|^2.
$$
Then, $c_{\calK} \ge h(\xi) \ge h_\delta^{\calK}(\xi) \ge h(\psi_\delta(\xi)) \ge - c_{\calK}$,
or equivalently, $ |h_\delta^{\calK}(\xi)| \le  c_{\calK}$.  Additionaly,
$$
|\psi_\delta(\xi)- \xi|^2 \le ( h(\xi)-h_\delta^{\calK}(\xi)) \delta \le 2   c_{\calK}\delta.
$$
Moreover, since $h(\psi_\delta(\xi)) \le h_\delta^{\calK}(\xi) \le h(\xi)$,
\begin{align*}
    \frakm_{\calK}(h,h_\delta^{\calK})  \le
\sup_{\xi \in \calK} \left[ ( h(\psi_\delta(\xi)) - h_\delta^{\calK}(\xi))^+ +  |\psi_\delta(\xi)-\xi|\right ]
 = \sup_{\xi \in \calK} |\psi_\delta(\xi)-\xi|
\le (2 c_{\calK} \delta)^{\tfrac12}.
\end{align*}
We now set $\delta_{\iota, \calK}:= \iota^2/(2c_{\calK})$ to complete the proof.
 \end{proof}}}

The next approximation result follows immediately. 
{\rr{\begin{theorem} \textnormal{\textbf{(Universal approximation of lower semicontinuous functions).}}
\label{thm:LSCUAT}
Let  $ \frakN \subseteq \frakF_c$ be a hypothesis space satisfying Assumption \ref{asm:UAP}. 
Then, for all $\iota >0$, $\calK \subset \subset \calE$ 
and $f \in \frakF$ that is  bounded in $\calK \times \calT$, 
there exists  $g\in \frakN$ such that \rr{$\frakm_{\calK} (f , g) \le \iota.$}  
\end{theorem}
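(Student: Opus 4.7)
The plan is to chain the approximation through an intermediate continuous function obtained as the inf convolution of $f$, and then invoke the universal approximation assumption on continuous functions. First, I apply \cref{lem:infCovApprox} with tolerance $\iota/2$ to obtain $\delta > 0$ such that $d_{\calK,\calT}(f, f_{\delta}^{\calK}) \le \iota/2$. The function $f_{\delta}^{\calK}(t,\cdot)$ is defined only on the compact set $\calK$, but it is continuous and non-negative, with values bounded by $M := \lVert f \rVert_{L^\infty(\calT \times \calK)}$. Because $\calK$ is a closed subset of the ambient metric space $\calE$, Tietze's extension theorem (applied once for each of the finitely many $t \in \calT$) yields a continuous non-negative bounded extension $\tilde{f}(t,\cdot) \in \calC(\calE;[0,M])$ that coincides with $f_{\delta}^{\calK}(t,\cdot)$ on $\calK$; hence $\tilde f \in \frakF_c$.

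Next, I apply Assumption \ref{asm:UAP} to $\tilde{f}$ with tolerance $\iota/2$ on the compact set $\calK$ to obtain $g \in \frakN$ satisfying $\lVert \tilde{f} - g \rVert_{L^\infty(\calT \times \calK)} \le \iota/2$. Since $\tilde f$ and $f_{\delta}^{\calK}$ agree on $\calK$, this also implies $\lVert f_{\delta}^{\calK} - g \rVert_{L^\infty(\calT \times \calK)} \le \iota/2$.

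It remains to combine the relaxed bound on the first leg with the sup-norm bound on the second. The main subtlety is that $d_{\calK,\calT}$ lacks a genuine triangle inequality, so one cannot naively add two relaxed-distance estimates. What rescues the argument is the following asymmetric variant: for any $\psi \in \Psi_r^{\calK}$ (which sends $\calK$ into itself), composition does not increase the sup norm, so that
\begin{equation*}
\lVert f_{\delta}^{\calK}(t,\cdot)\circ \psi - g(t,\cdot)\circ \psi \rVert_{L^\infty(\calK)} \le \lVert f_{\delta}^{\calK}(t,\cdot) - g(t,\cdot) \rVert_{L^\infty(\calK)} \le \iota/2.
\end{equation*}
Adding and subtracting $f_{\delta}^{\calK}(t,\cdot)\circ\psi$ inside $\lVert f(t,\cdot)\circ\psi - g(t,\cdot)\circ\psi \rVert_{L^\infty(\calK)}$, applying the pointwise triangle inequality, and then taking $\inf_{\psi\in \Psi_r^{\calK}}$, $\sup_{r>0}$, and $\max_{t\in \calT}$ in turn yields $d_{\calK,\calT}(f,g) \le d_{\calK,\calT}(f, f_{\delta}^{\calK}) + \iota/2 \le \iota$, as desired.
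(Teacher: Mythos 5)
Your proof is correct and follows essentially the same route as the paper: inf convolution via \cref{lem:infCovApprox} for the first leg, Assumption \ref{asm:UAP} for the second, and the observation that the sup-norm bound on $f_{\delta}^{\calK}-g$ is preserved under composition with any $\psi\in\Psi_r^{\calK}$, which is exactly what substitutes for the missing triangle inequality in the paper's argument as well. The Tietze extension step is a reasonable extra precaution but is not really needed, since the inf-convolution formula \eqref{eq:infconvCpctDef} already defines a continuous function of $\xi$ on all of $\calE$.
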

\begin{proof}
Fix $\iota> 0$ and $\calK \subset \subset \calE$. 
In view of \cref{lem:infCovApprox}, 
$\frakm_{\calK} (f , f_{\delta}^{\calK})\le \iota/2$ 
for some $\delta > 0$.  Moreover, as $f_{\delta}^{\calK}(t,\xi)$ is continuous in $\xi$,  we can invoke Assumption \ref{asm:UAP} giving the existence of   $g\in \frakN  \subseteq \frakF_c$ such that 
$\lVert f_{\delta}^{\calK} - g \rVert_{L^{\infty}(\calT \times \calK)} \le \iota/2$.
Since $\frakm$ is dominated by the supremum norm,
the triangular inequality yields 
$$
\frakm_{\calK}(f,g)
\le \frakm_{\calK}(f,f_\delta^{\calK})
+\frakm_{\calK}(f_\delta^{\calK}, g)
\le \frac12 \iota + \lVert f_{\delta}^{\calK} - g \rVert_{L^{\infty}(\calT \times \calK)}\\
\le \iota.
$$
\end{proof}
Finally, we obtain the following   convergence theorem for lower semicontinuous  boundaries. 
\begin{theorem}
   Let $\frakN \subset \frakF_c$  be a family of universal approximators,
   and $v^{\diamond}$ be the value function defined in \eqref{eq:vdiamond}.  
   Under Assumption \ref{asm:absCont}, for every $\delta>0$, 
   there exists $\varepsilon=\varepsilon(\delta), \gamma=\gamma(\delta)>0$  such that 
   any function $g^{\varepsilon,\gamma}\in \frakN$ 
    with the property 
   $v(\mu^{\varepsilon}_{g^{\varepsilon,\gamma}}) \ge \sup_{g \in \frakN} \  v(\mu^{\varepsilon}_g) - \gamma$ 
   also satisfies,
   $$
 v(\tau_{g^{\varepsilon,\gamma}})\ge v^{\diamond} - \delta
 =v(\tau_{f^\diamond})-\delta
 = \sup_{f \in \frakF} v(\tau_f)-\delta.
 $$
\end{theorem}
\begin{proof} Let $\delta'>0 $ be arbitrary. In view of \cref{thm:lscV}, 
there are $\delta''>0$, $\calK \subset \subset \calE$, $\iota>0$ such that
\begin{equation}
\label{eq:contV}
  \frakm_{\calK}(f,g)\le \iota \ \Longrightarrow \ 
  |v(\tau_{f}) - v(\tau_{g})|\le C\sqrt{\rho(\iota) + \delta''}\le \delta', 
  \quad \forall f,g\in \frakF.  
\end{equation}
As the optimal boundary $f^{\diamond}\in \frakF$ is locally bounded, we apply  
\cref{thm:LSCUAT} to construct $g^{\diamond} \in \frakN$ such that 
$\frakm_{\calK}(f^{\diamond}, g^{\diamond})\le \iota$. 
We conclude  from \eqref{eq:contV}  that $v(\tau_{g^{\diamond}}) \ge
v(\tau_{f^{\diamond}})- \delta'= v^{\diamond} - \delta'$. As $g^{\diamond}$ is 
continuous, we can follow the 
Steps 2, 3, and 4 in the proof of \cref{thm:convCont}. 
\end{proof}}}
\section{Conclusion}\label{sec:conclusion}
We analyze  hitting times of boundaries arising in optimal stopping. 
 After introducing relaxed stopping rules and establishing preliminary results, the continuity of the value functional $f\mapsto v(\tau_f)$ is shown with respect to the topology of uniform convergence and the novel relaxed $L^{\infty}$ distance. 
  The main results are later applied   to study the convergence of  the neural optimal stopping boundary method \cite{ReppenSonerTissotFB}. When the optimal stopping boundary is continuous, we can directly invoke the universal approximation theorem of neural networks and obtain a convergence theorem. When the optimal stopping boundary is strictly semicontinuous,  inf/sup convolutions are combined  with the relaxed $L^{\infty}$ distance, leading to a more general  result. 

A natural extension of the present work would be to show similar continuity and relaxation results for optimal stopping problems in continuous time. In particular, it would be worth investigating the convergence of the neural optimal stopping method for \textit{American} options, namely when $\calT = [0,T]$. 

 \appendix
 \section{Proof of \cref{prop:relaxOS}}
\label{app:relaxedproblem}

\begin{proof}
   We write $\underline{v}^{\diamond}:=  \sup_{\mu \in \underline{\vartheta}(\calT)} v(\mu)$ and show that  $\underline{v}^{\diamond} = v^{\diamond}$ by induction on $N = |\calT|$. If $N=1$,  then $\calT$ is a singleton, hence $\underline{\vartheta}(\calT) = \vartheta(\calT)$. We now suppose the result true for $N\ge 1$ and prove the claim for $N+1$. Let $\calT = \{t_1<\ldots < t_{N+1}\}\subset [0,T]$ and fix  $\mu = \sum_{t\in \calT} P_t \ \delta_t \in \underline{\vartheta}(\calT)$ where each $P_t:\Omega \to [0,1]$ is $\calF_t-$measurable.  Next,  write $\calT_{\! +} = \calT \setminus  \{t_1\}$ and consider the continuation value functional  
    $$v_1(\mu_{+},x) = \E^{\Q}\left[\int_{\calT_{\! +}} \varphi(t,X_{t}) \mu_{+}(dt)  \ \big | \ X_{t_1} = x\right], \quad \mu_{+}  \in \underline{\vartheta}(\calT_{\! +}). $$
    Since $|\calT_{\! +}| = N$, we have by induction that 
\begin{equation}\label{eq:induction}
        v_1(\mu_{+},x) \le v_1^{\diamond}(x) := \sup_{\tau \in \vartheta(\calT_{\! +})} \E^{\Q}[ \varphi(\tau,X_{\tau}) \ | \ X_{t_1} = x], \quad \forall \ \mu_{+}  \in \underline{\vartheta}(\calT_{\! +}). 
    \end{equation}
    Then, the tower property of conditional expectations implies that 
\begin{align}
        v(\mu) &= \E^{\Q}\left[P_{t_1} \varphi(t_1,X_{t_1}) + \int_{\calT_{\! +}} \varphi(t,X_{t}) \mu(dt)   \right]\nonumber  \\[0.5em]
        &= 
    \E^{\Q}\left[ P_{t_1} \varphi(t_1,X_{t_1}) + (1-P_{t_1})\int_{\calT_{\! +}} \varphi(t,X_{t}) \mu_{+}(dt)\right] \nonumber\\[0.5em] 
        &= 
    \E^{\Q}\left[ P_{t_1} \varphi(t_1,X_{t_1}) + (1-P_{t_1})v_1(\mu_{+},X_{t_1}) \right], \label{eq:trick}    
    \end{align} 
    with the relaxed stopping rule 
    $$\mu_{+}(\omega) = \sum_{t\in \calT_{\! +}}\frac{P_t(\omega)}{1-P_{t_1}(\omega)} \ \delta_t \ \in \  \underline{\vartheta}(\calT_{\! +}), \qquad P_{t_1} <  1.$$ Note that if $P_{t_1}(\omega) = 1$, then $\mu_{+}(\omega)$ would not appear in \eqref{eq:trick}. 
    Together with \eqref{eq:induction}, we obtain 
    \begin{align}
     v(\mu) &\le \E^{\Q}\left[ P_{t_1} \varphi(t_1,X_{t_1}) + (1-P_{t_1})v_1^{\diamond}(X_{t_1}) \right] \nonumber \\[1em]
     &=\E^{\Q}\left[ P_{t_1} (\varphi(t_1,X_{t_1}) - v_1^{\diamond}(X_{t_1})) \right] + \E^{\Q}\left[v_1^{\diamond}(X_{t_1}) \right].\label{eq:induction2}
     \end{align}
    We can now maximize the first term in \eqref{eq:induction2}  over all $\calF_{t_1}-$measurable functions $P_{t_1}:\Omega \to [0,1]$. 
  It is straightforward to see and also   classical in  optimal stopping \cite{PeskirShiryaev} 
  that the unique maximizer is given by\footnote{In financial terms, this  means that one should exercise a Bermudan contract whenever  the intrinsic value $\varphi(t_1,X_{t_1})$ exceeds the continuation value $v_1^{\diamond}(X_{t_1})$.}   
  $$P_{t_1}^{\diamond}(\omega) = \begin{cases} 1, \quad & \text{if } \varphi(t_1,X_{t_1}(\omega)) \  \ge \ v_1^{\diamond}(X_{t_1}(\omega)), \\ 
  0,\ & \text{otherwise}.\end{cases}$$ 
  In particular, it is $\{0,1\}-$valued. 
This leads to  the dynamic programming equation 
  $$v^{\diamond} = \E^{\Q}\left[ P_{t_1}^{\diamond} (\varphi(t_1,X_{t_1}) - v_1^{\diamond}(X_{t_1})) \right] + \E^{\Q}\left[v_1^{\diamond}(X_{t_1}) \right] = \E^{\Q}\left[  \varphi(t_1,X_{t_1}) \vee v_1^{\diamond}(X_{t_1}) \right] \ge v(\mu). $$
  As
   $v^{\diamond}\le \underline{v}^{\diamond}$ is evident, \eqref{eq:OSvsRelax}  follows. 
Moving to the second assertion,  the induction hypothesis implies that  every optimal relaxed stopping rule of $\sup_{\mu_{+} \in \underline{\vartheta}(\calT_{\!+})} v_1(\mu_{+},x)$ is in fact  a genuine stopping time, say $\tau_{+}$. Together with the above arguments, we gather  that the same holds true when adding back the first exercise date $t_1$. Precisely, we have $v(\tau^{\diamond}) = \underline{v}^{\diamond}$ with the stopping time 
$\tau^{\diamond} = P_{t_1}^{\diamond} t_1  + (1-P_{t_1}^{\diamond}) \tau_{+}$.   
\end{proof}

\renewcommand\bibname{References}
\bibliography{ref.bib}

\end{document}